\newcommand{\asnote}[1]{\authnote{Anish}{blue}{{#1}}}
\newcommand{\wpnote}[1]{\authnote{Wes}{red}{{#1}}}
\title{Direct sampling of short paths for contiguous partitioning}
\author[1]{Wesley Pegden\thanks{Research Supported in part by NSF grant DMS1700365}}
\author[2]{Anish Sevekari}
\affil[1,2]{Department of Mathematical Sciences, Carnegie Mellon University}
\affil[1]{wes@math.cmu.edu}
\affil[2]{asevekar@andrew.cmu.edu}
\begin{document}

\maketitle
\begin{abstract}
	In this paper, we provide a family of dynamic programming based algorithms to sample \emph{nearly-shortest self avoiding walks} between two points of the integer lattice $\Z^2$. We show that if the shortest path of between two points has length $n$, then we can sample paths (\emph{self-avoiding-walks}) of length $n+O(n^{1-\delta)})$ in polynomial time.  As an example of an application, we will show that the Glauber dynamics Markov chain for partitions of the \emph{Aztec Diamonds} in $\Z^2$ into two contiguous regions with nearly tight perimeter constraints has exponential mixing time, while the algorithm provided in this paper can be used be used to uniformly (and exactly) sample such partitions efficiently.
\end{abstract}

\section{Introduction}
\label{s:introduction}
Analysis of political redistrictings has created a significant impetus for the problem of random sampling of graph partitions into connected pieces---e.g., into districtings.

The most common approach to this problem in practice is to use a Markov Chain; e.g., Glauber dynamics, or chains based on cutting spanning trees (e.g., \cite{recomb,matttree,mattmultiscale}).  Rigorous understanding of mixing behavior is the exception rather than the rule;  for example, \cite{barpaths} established rapid mixing of a Markov chain for the special case where both partition classes are unions of horizontal bars, which in each case meet a common side.  No rigorous approach is known, for example, which can approximately uniformly sample from contiguous $2$-partitions even of lattice graphs like the $n\times n$ grid in polynomial time

In this paper we consider a direct approach, where instead of leveraging a Markov chain with unknown mixing time to generate approximate uniform samples, we use a dynamic programming algorithm and rejection sampling to exactly sample from self-avoiding walks in the lattice $\Z^2$ (which correspond to partition boundaries) in polynomial expected time.  Counting self-avoiding lattice walks is a significant long-standing challenge; the connective constant---the base of the exponent in the asymptotic formula for the number of such walks---is not even known for $\Z^2$.  But we will be interested in sampling \emph{nearly-shortest} self avoiding walks, motivated by districting constraints which discourage the use of large district perimeters relative to area.  In particular, we will prove:

\begin{theorem}
	\label{t:algorithm_runtime}
	For any $C$ and $\varepsilon>0$ and for any $n_1,n_2,$ and $n=n_1+n_2$, there is a randomized algorithm which runs w.h.p in polynomial time, and produces a uniform sample from the set of self-avoiding walks in $\Z^2$ from $(0,0)$ to $(n_1,n_2)$ of length at most
\[ n+Cn^{1-\varepsilon}. \]
\end{theorem}

\begin{figure}[t]
    \centering
    \includegraphics[width=.3\linewidth]{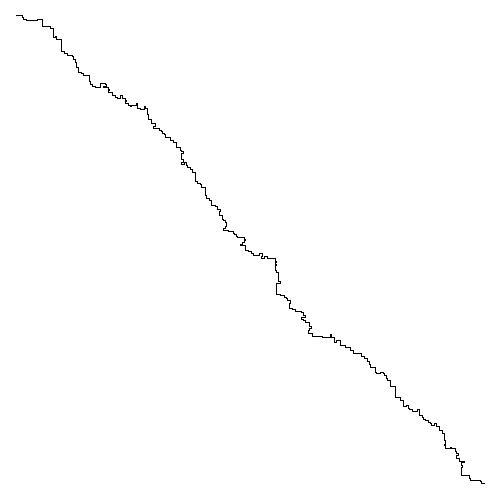}
    \includegraphics[width=.3\linewidth]{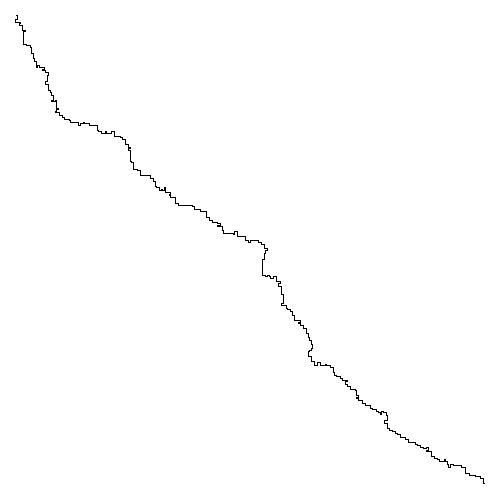}
    \includegraphics[width=.3\linewidth]{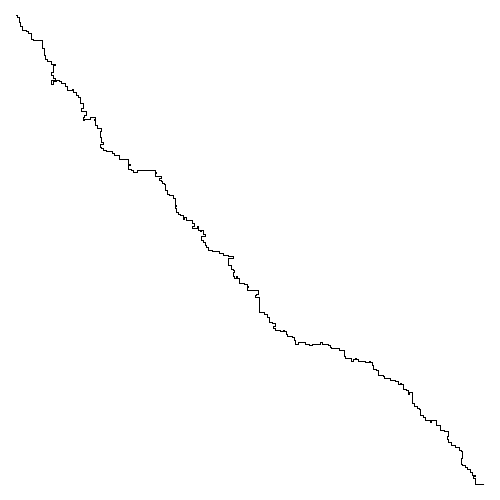}
    \caption{Uniformly random self-avoiding walks of length 700 between corners of a $300\times 300$ grid, generated with the algorithm from \Cref{t:algorithm_runtime}.}
    \label{fig:my_label}
\end{figure}

\begin{figure}[t]
    \centering
    \iftrue
    \includegraphics[width=.3\linewidth]{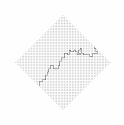}
    \includegraphics[width=.3\linewidth]{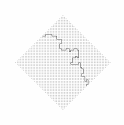}
    \includegraphics[width=.3\linewidth]{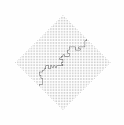}
    \else
    \includegraphics[width=.49\linewidth]{pictures/aztec/1.pdf}
    \includegraphics[width=.49\linewidth]{pictures/aztec/2.pdf}
    \fi
    \caption{Uniformly random self-avoiding walks on $A_{30}$ such that both sides have perimeter of at most $220$.}
    \label{fig:aztec}
\end{figure}

A variant of this algorithm can be used to sample from contiguous $2$-partitions of the Aztec diamond with restricted partition-class perimeter, by sampling short paths between nearly-antipodal points on the dual of the Aztec diamond. These paths are in bijection with the contiguous $2$-partitions of the Aztec diamond, by mapping a partition to it's boundary which gives us a path.
This approach generates samples in polynomial time w.h.p. In contrast, we show that the traditional approach using Markov chains is inefficient:

\begin{theorem}
	\label{t:glauber_mixing}
	For any $C$ and $\varepsilon>0$, Glauber dynamics has exponential mixing time on contiguous $2$-partitions of the Aztec diamond $A_k$ when constrained by perimeter slack $Ck^{1-\varepsilon}$.
\end{theorem}

\noindent \textbf{Organization of the Paper:} The paper is organized in the following manner: \Cref{s:algorithm} describes a dynamic programming algorithm (\Cref{alg:low_girth_walks}) to sample walks without short cycles and proves its correctness. \Cref{s:number_of_paths,s:low_girth_walks} show that the algorithm actually returns a self-avoiding path from $(0,0)$ to $(n_1,n_2)$ in the unbounded lattice graph $\Z^2$ in polynomial time with high probability, enabling the random sampling of paths for rejection sampling. \Cref{s:lattice_subgraphs} provides the same result for \emph{wide} subgraphs of the lattice, the notion of \emph{wide} subgraph is also defined in this section. The last section, \Cref{s:aztec_diamond} is dedicated to proving \Cref{t:glauber_mixing}, and showing that Aztec diamond is a \emph{wide} subgraph of the lattice.

\noindent \textbf{Notation:} For the rest of the paper, we will typically use letters $A,B, \ldots$ for denoting paths from $O = (0,0)$ to $P = (n_1,n_2)$. We will use letters $Q,R, \ldots$ to denote points on the grid.
Each path $A$ from $O$ to $P$ of length $n + 2k$ has two representations, we can describe $A$ by the sequence of moves $a_1, \ldots, a_{n+2k}$ where $a_i \in {L,R,U,D}$ denotes the direction of next step in the path. On the other hand, we can also denote path $A$ by the sequence of points that it visits, namely, $O = A_0, \ldots, A_{n+2k} = P$.
Typically, we will also use $B$ to denote a shortest path, and $A$ to denote a larger path.

We will further let $P_k, W_k, W_k^l$ denote the number of paths (self-avoiding walks), number of walks, and number of walks without cycles smaller than $2l$ from $O$ to $P$ of length $n + 2k$ respectively.

\section{Dynamic Programming Algorithm}
\label{s:algorithm}
In this section, we will describe the dynamic programming algorithms that counts $W_k^l$, the number of walks of length $n+2k$ without short cycles, that is, without cycles of length smaller than $2l$ from $O = (0,0)$ to $P = (n_1,n_2)$ in a subgraph $S$ of the grid $\Z^2$.  The algorithm memorizes the number of paths from every point $Q \in S$ to $P$, along with previous $2l$ steps, which is given by a walk $w$ of length $2l$ ending at $Q$. Let $\Phi_l(Q)$ denote the set of paths ending at $Q$ of length at most $2l$.

\noindent
\hspace{0.01\textwidth}
\begin{minipage}{0.48\textwidth}
	\begin{algorithm}[H]
		\caption{Counting Low Girth Walks}
		\label{alg:low_girth_walks}
		\begin{algorithmic}[1]
			\State $\Call{DP}{Q,P,w,t} = 0$ for $Q \in S$, $w \in \Phi_l(Q)$, $0 \le t \le n + 2k$
			\Function{Walks}{$Q,P,w,t$}
				\If{$t = 0$}
					\If{$Q = P$}
						\State \Return $\Call{DP}{Q,P,w,t} = 1$
					\Else
						\State \Return $\Call{DP}{Q,P,w,t} = 0$
					\EndIf
				\EndIf
				\If{$\Call{DP}{Q,P,w,t} \neq 0$}
					\State \Return $\Call{DP}{Q,P,w,t}$
				\EndIf
				\For{$d \in \set{(1,0),(0,1),(-1,0),(0,-1)}$}
					\If{$Q + d \in S$ \textbf{and} $d \notin w$}
						\State $R = Q + d$
						\State $w'$ is the path obtained by appending $R$ to $w$ and trimming down to length $2l$.
						\State $\Call{DP}{Q,P,w,t} \mathrel{+}= \Call{Walks}{R,P,w',t-1}$
					\EndIf
				\EndFor
				\State \Return $\Call{DP}{Q,P,w,t}$
			\EndFunction
		\end{algorithmic}
	\end{algorithm}
\end{minipage}
\hspace{0.02\textwidth}
\begin{minipage}{0.48\textwidth}
	\begin{algorithm}[H]
		\caption{Sampling Low Girth Walks}
		\label{alg:sampling_walks}
		\begin{algorithmic}[1]
			\Function{Sample Walks}{$k$}
				\State $w = O$
				\For{$i = 0$ to $n + 2k$}
					\For{$Q \sim w[i]$}
					\State $p_{Q} = \Call{DP}{Q, w', n+2k-1-i}$
						\State where $w' = w[i-2l+1] \cdots w[i] Q$ is path of length $2l$ ending at $Q$
					\EndFor
					\State Sample $w[i+1]$ from $Q \sim w[i]$ proportional to $p_Q$.
				\EndFor
				\State \Return $w$
			\EndFunction
		\end{algorithmic}
	\end{algorithm}
	\begin{algorithm}[H]
		\caption{Sampling Paths}
		\label{alg:sampling_paths}
		\begin{algorithmic}[1]
			\Function{Sample Paths}{k}
				\While{$w$ is not a path}
					\State $w = \Call{Sample Walks}{k}$
				\EndWhile
				\State \Return $w$
			\EndFunction
		\end{algorithmic}
	\end{algorithm}
\end{minipage}
\hspace{0.01\textwidth}

Once we have number of these paths, we can sample a walk of length $n+2k$ without cycles of length smaller than $2l$ by starting at $O$ and sampling points in the walk with correct probability using memoized values obtained by \cref{alg:low_girth_walks}.

Since there are at most $4^{2l}$ paths of length $2l$, $\abs{\Phi_l(Q)} \le \sum_{i=0}^{l} 16^i = 2 \cdot 16^{l}$ for any point $Q$. Therefore, size of the DP table in \Cref{alg:low_girth_walks} is $\abs{S} \cdot 16^l$, and each entry in this table takes $O(l)$ time to compute, since deg of each vertex in $S$ is at most $4$. Therefore, \Cref{alg:low_girth_walks} takes $O\brac{\abs{S} \cdot l \cdot 16^l} = O(\abs{S})$ time for constant $l$. Note that these paths are restricted to the set of points $\mathcal{R} = \set{Q \st O - (k,k) \le Q \le P + (k,k)}$. Thus, for large $S$  (in particular for $S = \Z^2$), we can restrict the algorithm to $S' = \mathcal{R} \cap S$. 

Further, once the DP table is computed, \Cref{alg:sampling_walks} runs in $O(n+2k)$ time. We will prove in \Cref{t:long_cycle_bound} that for $k \le Cn^{1 - \varepsilon}$ and $S = \Z^2$, \Cref{alg:sampling_walks} actually returns a path with probability $1 - o(1)$ for $l > \tfrac{1}{\varepsilon}$. This implies that \Cref{alg:sampling_paths} runs in $O(n+2k)$ time with high probability, completing the proof of \Cref{t:algorithm_runtime}. We will provide a sufficient condition for subgraphs $S \subseteq \Z^2$ in \Cref{t:long_cycle_bound_subgraph} which implies the same probability bound for these specific subgraphs $S$.

\section{Number of Paths in a Grid}
\label{s:number_of_paths}
This section focuses on getting bounds on the number of paths from $O = (0,0)$ to $P = (n_1, n_2)$ in the grid. Recall that paths are in fact \emph{self-avoiding walks}. Let $n = n_1 + n_2$ be the length of a shortest path from $O$ to $P$. We will provide some upper and lower bounds on the number of paths of length $n + 2k$ from $O$ to $P$ in terms of number of shortest paths from $O$ to $P$. These upper and lower bounds are based on constructing extensions of shortest paths.

In general, we will associate a shortest \emph{base path} to every path from $O$ to $P$. This association is described in \Cref{d:path_base_map}. We will also provide procedures for extending shortest paths to larger paths, which respects the base path mapping. Then the lower bound on paths of length will follow by bounding the number of extensions of each shortest path, and upper bound will follow from bounding the number of paths of length $n+2k$ that have a specific given path as  the associated \emph{base path}.

Let a shortest path $B$ be described by sequence of moves $b_1, \ldots, b_n$ where $n = n_1 + n_2$, where each $b_i \in \set{U,R}$ describes the direction of move at $i^{\text{th}}$ step. Then we have the following procedure to extend the path $B$ to a path $A$ from $O$ to $P$ of length $n + 2k$.

\begin{definition}
	\label{d:extending_paths}
	Given a shortest path $B$ represented by $b_1, \ldots, b_n$ from $O = (0,0)$ to $P = (n_1,n_2)$ where $n=n_1+n_2$, and a set $M = \set{i_1, \ldots, i_k}$ of indices, we define the extended path $A = \mathcal{A}(B,M)$ obtained by performing following replacements for all $j = 1, \ldots, k$:
	\begin{enumerate}[nosep]
		\item If $b_{i_j} = R$, replace it by $DRU$.
		\item If $b_{i_j} = U$, replace it by $LUR$.
	\end{enumerate}
	For an edge $b_i$, we will also refer to the operation above as \emph{bumping the edge}. Further, we will say that an edge $b_i$ can be \emph{bumped} if bumping the edge $b_i$ gives us a path.
\end{definition}

\begin{figure}
	\centering
	\includesvg[width=0.5\textwidth]{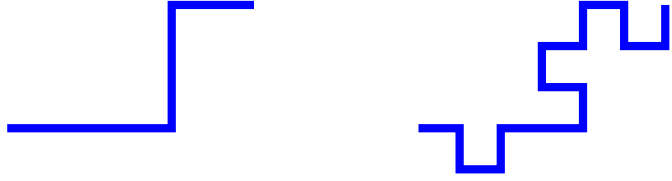}
	\caption{Bumping a shortest path at indices $2,6,9$}
	\label{f:extension_basic_shortest}
\end{figure}

\Cref{f:extension_basic_shortest} illustrates how \Cref{d:extending_paths} behaves when extending shortest paths.
It is not true that for all choices of $M$ the map $\mathcal{A}(B,M)$ is a path. But, we will show that for a large choice of set $M$, it is a path.

\begin{lemma}
	\label{l:extending_paths_correctness}
	For any choice of $M$ such that $b_{i_j - 1} = b_{i_j}$ for all $j$, the map $\mathcal{A}(B,M)$ gives us a path.
\end{lemma}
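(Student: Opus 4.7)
The plan is to verify that $\mathcal{A}(B,M)$ is a walk from $O$ to $P$ of length $n+2k$ and then to establish self-avoidance. The walk and length claims are immediate, since both $DRU$ and $LUR$ start and end at the same pair of vertices as the single edges $R$ and $U$ they replace, so the endpoints and connectivity of the walk are preserved and each bump adds exactly two steps. The self-avoidance argument will hinge on the monotonicity of the base path: since $B$ uses only $U$ and $R$ moves, its vertices $p_0,\ldots,p_n$ satisfy $p_s \le p_t$ coordinate-wise whenever $s\le t$, and each bump introduces two new vertices lying strictly south (for an $R$-bump) or strictly west (for a $U$-bump) of $p_{i-1}$.

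First I would check that no new vertex coincides with a base path vertex. For an $R$-bump at index $i$, writing $p_{i-1}=(x,y)$, the new vertices are $(x,y-1)$ and $(x+1,y-1)$, both with $y$-coordinate less than every $p_s$ for $s\ge i-1$. A would-be collision $p_s=(x,y-1)$ with $s<i-1$ is ruled out by coordinate monotonicity except in the corner case $s=i-2$ with $b_{i-1}=U$, which is precisely what the hypothesis $b_{i-1}=b_i=R$ forbids; the vertex $(x+1,y-1)$ is excluded outright by a strict comparison in one coordinate or the other. The $U$-bump case is symmetric.

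Then I would show that the detours from two distinct bumped indices $i<j$ are vertex-disjoint. A mixed pair (one $R$-bump and one $U$-bump) is handled immediately: each of the four potential vertex coincidences gives a displacement $p_{j-1}-p_{i-1}$ whose signs are inconsistent with monotonicity. For two $R$-bumps, three of the four potential coincidences are again ruled out by strict coordinate monotonicity, and the fourth reduces to $p_{i-1}+(1,-1)=p_{j-1}+(0,-1)$, which forces $p_{j-1}=p_i$ and hence $j=i+1$. The case of two $U$-bumps is symmetric.

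The main obstacle is exactly this last reduction. When $j=i+1$ with $b_i=b_{i+1}=R$, the two $R$-detours genuinely share the vertex $p_{i-1}+(1,-1)=p_i+(0,-1)$ (and symmetrically for two adjacent $U$-bumps), so the hypothesis $b_{i_j-1}=b_{i_j}$ alone is not enough. I would therefore read the lemma as implicitly also requiring $i_j-i_{j-1}\ge 2$, or else add this non-adjacency condition explicitly; under the strengthened hypothesis the case analysis above closes out the self-avoidance check, and the rest of the verification is routine bookkeeping.
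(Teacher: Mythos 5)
Your argument follows essentially the same route as the paper's: exploit the coordinate monotonicity of the shortest base path to show the two vertices introduced by each bump lie strictly to one side of $B$ and therefore miss $B$ itself, use the hypothesis $b_{i_j-1}=b_{i_j}$ to close the one remaining corner case against a collision with $B_{i_j-2}$, and then check that the detours from distinct bumps are pairwise disjoint. You have also correctly spotted that the lemma as stated is slightly too weak: the hypothesis $b_{i_j-1}=b_{i_j}$ does not preclude two consecutive indices $i,i+1\in M$ (this can happen along a straight run of three or more identical moves), and in that case the two detours share the vertex $B_{i-1}+(1,-1)=B_i+(0,-1)$ (resp.\ its $U$-analogue), so the result is not a path. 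The paper's own proof quietly adds the non-adjacency requirement in its final sentence, and the later application in \Cref{l:path_extension_basic} does impose it, so your reading of the lemma as implicitly requiring $i_{j}-i_{j-1}\ge 2$ is the intended one; the lemma statement should be amended to say so explicitly.
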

\begin{proof}
	Let path $B$ be go through the points $O = B_0 \ldots B_n = P$. Then for any point $B_i = (x_i, y_i)$ if the point $X = (x_i - 1, y_i)$ is also in the path $B$ then $X$ must be connected to $B_i$, and hence $B_{i-1} = X$ since otherwise there is a subpath from $(x_i,y_i)$ to $(x_i - 1, y_i + 1)$ (or the other way around) in $B$, which implies that $B$ is not a shortest path.

	In particular, if $b_{i-1} = b_i = U$ then the points to the left of $B_{i-1}$ and $B_i$, that is, the points $(x_{i-1} - 1, y_{i-1})$ and $(x_i - 1, y_i)$ are not in $B$. Therefore, if we replace $b_i$ by $LUR$, we change the portion of path from $B_{i-1}$ to $B_i$ to look like
	\[ B_{i-1} = (x_{i-1}, y_{i-1}) \rightarrow (x_{i-1} - 1, y_{i-1}) \rightarrow (x_{i-1} - 1, y_{i-1} + 1) = (x_i - 1, y_i) \rightarrow (x_i, y_i) = B_i \]
	which is a path since newly added points were not in $B$ initially. Similar argument works for $DRU$ modifications. The modifications of type $U \to LUR$ and $R \to DRU$ happen on opposite side of the path $B$, and hence don't intersect. Further, all the modifications of type $U \to LUR$ don't intersect unless they are adjacent to each other. Therefore, if the set $M$ contains non-adjacent indices, then we can perform all the modifications simultaneously without creating any loops. 
	Further, observe that these modifications do not intersect each other if the set $M$ contains non-adjacent indices, and can be performed simultaneously.
\end{proof}

We will use this procedure described in \Cref{d:extending_paths} to generate a family of paths of length $n + 2k$. To ensure that there are a lot of choices for $M$, we need to argue that most shortest paths from $O$ to $P$ have $\tfrac{n}{2} - o(n)$ many places where the hypothesis of \Cref{l:extending_paths_correctness} is satisfied. This is formalized in the next lemma.

\begin{lemma}
	\label{l:straight_path}
	For any point $P = (n_1, n_2)$ with $n_1 + n_2 = n$, a shortest path from $O = (0,0)$ to $P$ drawn uniformly at random has at least $\frac{n}{2} - O(\sqrt{-n \log \varepsilon})$ places with two consecutive moves in the same direction with probability $1 - \varepsilon$. 
\end{lemma}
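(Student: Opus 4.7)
The plan is to encode a uniformly random shortest path from $O$ to $P$ as a uniformly random sequence $b_1,\ldots,b_n\in\{U,R\}$ with exactly $n_1$ copies of one symbol and $n_2$ copies of the other, and to study
\[ s := \sum_{i=2}^n X_i, \qquad X_i := \mathbf{1}[b_{i-1}=b_i]. \]
I would first compute $\mathbb{E}[s]$ by symmetry: for each $i$,
\[ \Pr[b_{i-1}=b_i] = \frac{n_1(n_1-1)+n_2(n_2-1)}{n(n-1)} = 1 - \frac{2n_1 n_2}{n(n-1)}, \]
so linearity combined with $n_1 n_2 \le n^2/4$ (AM--GM) gives $\mathbb{E}[s] = (n-1) - 2n_1 n_2/n \ge n/2 - 1$, which already has the right order.

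To upgrade this to a high-probability bound I would apply Azuma--Hoeffding to the Doob martingale $Y_i := \mathbb{E}[s\mid b_1,\ldots,b_i]$, so that $Y_0 = \mathbb{E}[s]$ and $Y_n = s$. Given $b_1,\ldots,b_i$, the suffix $b_{i+1},\ldots,b_n$ is uniform among arrangements of $u_i$ remaining $U$s and $r_i := (n-i) - u_i$ remaining $R$s, which gives the closed form
\[ \Pr[X_j = 1 \mid b_1,\ldots,b_i] = \frac{u_i(u_i-1) + r_i(r_i-1)}{(n-i)(n-i-1)} \qquad (j\ge i+2), \]
and an analogous simpler expression for $j=i+1$. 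Summing over $j$ yields $Y_i = \sum_{j=2}^i X_j + g(b_i,u_i,r_i,n-i)$ for an explicit rational function $g$.

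The central step is to check that $|Y_{i+1} - Y_i| \le C$ for a universal constant $C$. Revealing $b_{i+1}$ realises $X_{i+1}$ (change at most $1$), shifts exactly one of $u_i, r_i$ by $1$, and decreases $n-i$ by $1$; a short calculation on $g$ using the identity $u_i + r_i = n - i$ shows that the dominant change in the $j\ge i+2$ contribution simplifies to $\frac{2 r_i^2}{(n-i)(n-i-1)} - 1$ (or the symmetric $u_i$ analogue), which lies in $[-1,1]$ since $r_i \le n-i-1$, plus further terms each bounded by $1$. Azuma--Hoeffding then gives $\Pr[s \le \mathbb{E}[s] - t] \le \exp(-t^2/(2nC^2))$, so taking $t = C\sqrt{-2n\log\varepsilon}$ combined with $\mathbb{E}[s] \ge n/2 - 1$ yields the desired lower bound of $n/2 - O(\sqrt{-n\log\varepsilon})$ with probability at least $1-\varepsilon$.

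The main obstacle is exactly this bounded-differences verification. The function $g$ has the small denominator $(n-i)(n-i-1)$ and depends on three coupled variables that all move when $b_{i+1}$ is revealed, so a naive estimate produces a bound growing with $n$; the cancellation keeping the martingale increment $O(1)$ emerges only after expanding carefully and exploiting $u_i + r_i = n-i$, and must be checked uniformly in $i$, $n_1$, and $n_2$.
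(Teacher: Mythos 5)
Your approach is correct and genuinely different from the paper's. The paper lower-bounds $\Pr[X_i=1 \mid X_1,\ldots,X_{i-1}] \ge \tfrac12 - \tfrac{1}{n-i-1}$ by a direct case analysis on $X_{i-1}\in\{0,1\}$, then stochastically dominates $\sum X_i$ from below by a sum of \emph{independent} Bernoullis $e_i$ with $\Pr[e_i=1]=\tfrac12 - \tfrac1{n-i-1}$ via a coupling $e_i=1 \Rightarrow X_i=1$, and applies Hoeffding to $\sum e_i$. You instead compute $\ex{s}$ exactly in one line by exchangeability and linearity, then concentrate $s$ around its mean with Azuma--Hoeffding on the Doob martingale $Y_i=\ex{s\mid b_1,\ldots,b_i}$. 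What your route buys is a much shorter derivation of the mean: the paper's conditional-probability casework (the two displays conditioning on $X_{i-1}=0$ and $X_{i-1}=1$) is replaced by the one-line identity $\Pr[b_{i-1}=b_i]=1-\tfrac{2n_1n_2}{n(n-1)}$; it also gives the two-sided concentration around $\ex{s}$ for free, whereas the paper only establishes a lower tail. What it costs is the bounded-differences check.

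On that check: it does go through, and you have the algebra right. With $h_i := \sum_{j\ge i+2}\ex{X_j\mid b_1,\ldots,b_i} = \frac{u_i(u_i-1)+r_i(r_i-1)}{n-i}$ and $b_{i+1}=U$, writing $m=n-i=u_i+r_i$ gives exactly
\[
h_{i+1}-h_i \;=\; \frac{2r_i^2}{m(m-1)} - 1 \;\in\;(-1,1),
\]
and the two remaining terms $X_{i+1}-\ex{X_{i+1}\mid\cdot}$ and $\ex{X_{i+2}\mid b_1,\ldots,b_{i+1}}$ lie in $[-1,1]$ and $[0,1]$, so $|Y_{i+1}-Y_i|\le 3$ uniformly. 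However, you can avoid the explicit computation of $g$ entirely and dispel the worry that ``a naive estimate produces a bound growing with $n$'': since the composition of $(b_1,\ldots,b_n)$ is fixed, conditionally on $b_1,\ldots,b_i$ the two laws of $(b_{i+2},\ldots,b_n)$ corresponding to $b_{i+1}=U$ and $b_{i+1}=R$ admit a coupling under which the full sequences differ in exactly two coordinates (position $i+1$ and one later position). Changing two coordinates changes at most four of the indicators $X_j$, so $|\ex{s\mid b_1,\ldots,b_i,U}-\ex{s\mid b_1,\ldots,b_i,R}|\le 4$ and hence $|Y_{i+1}-Y_i|\le 4$ with no algebra on $g$ at all. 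This is the standard bounded-differences argument for exchangeable sequences and is both more robust and easier to check uniformly in $i,n_1,n_2$ than the direct expansion you sketch, though your expansion is correct.
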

\wpnote{still need to replcae $\epsilon$'s with $\varepsilon$'s in some places}
\begin{proof}
	Let $B$ be a shortest path from $O$ to $P$. $B$ can be denoted as a sequence of exactly $n_1$ right moves and exactly $n_2$ up moves. Let us denote this path by $b_1, \ldots, b_n$ where $b_i \in {R,U}$. We can draw a path uniformly at random by picking uniformly at random from a bag with $n_1$ $R$ symbols and $n_2$ $U$ symbols without replacement.
	Let $X_i$ be the indicator random variable for the event that $b_i = b_{i+1}$. Now, we first observe that 
	\[ \pr{X_i \st b_1, \ldots, b_{i-1}} = \frac{p(p-1)}{r(r-1)} + \frac{q(q-1)}{r(r-1)} = \frac{p^2 + q^2 - r}{r(r-1)} \ge \frac{1}{2} - \frac{1}{2(r-1)} \]
	where $p$ is number of $U$ symbols left in the bag, $q$ is number of $R$ symbols left in the bag and $r = p + q$. 
	Now, we will show that  $\pr{X_i \st X_1, \ldots, X_{i-1}} \ge \frac{1}{2} - \frac{1}{n- i - 1}$. It suffices to show that $\pr{X_i = 1\st b_1, \ldots, b_{i-2}, X_{i-1}} \ge \frac{1}{2} - \frac{1}{n - i - 1}$. We will show this by doing two cases: $X_{i-1} = 0$ and $X_{i-1} = 1$. In the first case, $X_{i-1} = 0$,
	\begin{align*}
		\pr{X_i = 1 \st b_1, \ldots, b_{i-2}, X_{i-1} = 0}
		& = \frac{\pr{X_i = 1, X_{i-1} = 0 \st b_1, \ldots, b_{i-2}}}{\pr{X_{i-1} = 0 \st b_1, \ldots, b_{i-2}}} \\
		& = \frac{\frac{p q (q-1) + q p (p-1)}{r(r-1)(r-2)}}{\frac{pq + qp}{r(r-1)}} \\
		& = \frac{pq(p + q - 2)}{2pq(r-2)} = \frac{1}{2} 
	\end{align*}
	where $p$ is number of $U$ symbols left, $q$ is the symbol of $R$ symbol left, and $r = p + q$.
	In the second case, using the same notation, we have
	\begin{align*}
		\pr{X_i = 1 \st b_1, \ldots, b_{i-2}, X_{i-1} = 1}
		& = \frac{\pr{X_i = 1, X_{i-1} = 1 \st b_1, \ldots, b_{i-2}}}{\pr{X_{i-1} = 1 \st b_1, \ldots, b_{i-2}}} \\
		& = \frac{\frac{p(p-1)(p-2) + q(q-1)(q-2)}{r(r-1)(r-2)}}{\frac{p(p-1) + q(q-1)}{r(r-1)}} \\
		& = \frac{p(p-1)(p-2) + q(q-1)(q-2)}{(p(p-1) + q(q-1))(r-2)} \\
		& = \frac{p^3 + q^3 - 3(p^2 + q^2) + 2 (p+q)}{(p^2 + q^2 - (p + q))(r-2)} \\
		& = \frac{r^3 - 3 pqr - 3(r^2 - 2pq) + 2r}{(r^2 - 2pq - r)(r-2)}\\
		& = \frac{r^3 - 3r^2 + 2r - 3pq(r-2)}{(r^2 - r - 2pq)(r-2)}\\
		& = \frac{r(r-1)(r-2) - 3pq(r-2)}{(r^2 - r - 2pq)(r-2)} \\
		& = \frac{r(r-1) - 3pq}{r(r-1) - 2pq}
	\end{align*}
	Note that this term is maximized when $pq$ is minimized, and is minimized when $pq$ is maximized. Constrained to the fact that $p+q = r$ and $p,q \ge 0$, we get
	\[ 1 \ge \frac{r(r-1) - 3pq}{r(r-1) - 2pq} \ge \frac{4r^2 - 4r - 3r^2}{4r^2 - 4r - 2r^2} = \frac{r-4}{2(r-2)} = \frac{1}{2} - \frac{1}{r-2}\]
	Therefore, in both cases, we have
	\[ \pr{X_i = 1 \st X_1, \ldots, X_{i-1}} \ge \frac{1}{2} - \frac{1}{n-i-1} \]
	Now, we couple variables $X_i$ with variables $e_i$, drawn independently such that $\pr{e_i = 1} = \frac{1}{2} - \frac{1}{n-i-1}$. To begin with, we draw $b_1$ with correct probabilities. Then for each $i$, we draw $f_i$ uniformly at random from $[0,1]$. We set $e_i = 1$ if  $f_i \le \pr{e_i = 1}$ and we set $e_i = 0$ otherwise. Further, if $f_i \le \pr{X_i = 1 \st X_1,X_2,\dots,X_{i-1}}$, then we set $a_{i+1}$ such that $X_i = 1$, otherwise we set $a_{i+1}$ such that $X_i = 0$; note that the status of $X_{i+1}$ uniquely determines the choice of $a_{i+1}$. Therefore, $e_i = 1 \implies X_i = 1$, and hence $\sum_{i=1}^{n-1} e_i \le \sum_{i=1}^{n-1} X_i$. Notice that $e_i$ are still independent random variables. Therefore,
	\[ \pr{\sum X_i \le \ex{\sum e_i} - t} \le \pr{\sum e_i \le \ex{\sum e_i} - t} \le \exp\brac{-\frac{2t^2}{n}} \]
	Where the last inequality follows from Hoeffding's inequality. Note that
	\[ \ex{\sum e_i} = \sum \frac{1}{2} - \frac{1}{n-i+1} \ge \frac{n}{2} - 2 \log n \]
	Given any $\varepsilon > 0$, and $t = \sqrt{- n \log \varepsilon}$, we get that 
	\[ \pr{\sum X_i \le \frac{n}{2} - 2 \log n - \sqrt{-n \log \varepsilon}} \le \varepsilon \]
	This proves the required result.
\end{proof}

This allows us to lower bound the number of paths of length $n + 2k$ from $O = (0,0)$ to $P = (n_1, n_2)$ where $n_1 + n_2 = n$. Recall that $P_k$ denotes the number of these paths.

\begin{lemma}
	\label{l:path_extension_basic}
	For any $k \le 0.1 n$ and $1 > \varepsilon \ge 0$, we have the lower bound
	\[ P_k \ge (1 - \varepsilon) P_0 \binom{t - 2k}{k} \]
	where $t = \tfrac{n}{2} - 2 \log n - \sqrt{n \log (1/\varepsilon)}$. Further, there is $n_0 = n_0(\varepsilon)$, such that for all $n \ge n_0$,
	\begin{equation}
		\label{eq:path_extension_bound_basic}
		P_k \ge (1 - \varepsilon) P_0 \frac{(0.49)^k n^k}{k!} \exp\brac{-O \brac{\frac{k^2}{n}}} 
	\end{equation}
\end{lemma}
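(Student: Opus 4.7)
The plan is to construct many distinct length-$(n+2k)$ paths by applying the bumping procedure of Definition \ref{d:extending_paths} to many shortest base paths, with Lemma \ref{l:straight_path} ensuring that most base paths admit the required many bumps. Call a shortest base path $B = b_1\cdots b_n$ \emph{good} if it has at least $t := \tfrac{n}{2} - 2\log n - \sqrt{n \log(1/\varepsilon)}$ indices $i$ with $b_{i-1}=b_i$; call these the \emph{bumpable} positions of $B$. By Lemma \ref{l:straight_path}, at least $(1-\varepsilon) P_0$ of the $P_0$ shortest paths are good.

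For each good $B$ with bumpable positions $p_1 < p_2 < \cdots < p_s$ (where $s \ge t$), I count the valid extension sets $M$. Selecting $k$ of the $p_i$ by ranks $r_1 < \cdots < r_k$ with consecutive gaps $r_{j+1}-r_j\ge 2$ produces a set $M = \{p_{r_1},\ldots,p_{r_k}\}$ of $k$ bumpable indices that are pairwise non-adjacent in $[n]$ (since the $p_i$ are strictly increasing), so Lemma \ref{l:extending_paths_correctness} guarantees $\mathcal{A}(B,M)$ is a valid length-$(n+2k)$ path. A stars-and-bars argument with a conservative spacing (leaving room for the base-path recovery) yields at least $\binom{t-2k}{k}$ such $M$. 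To aggregate across base paths, I appeal to the base-path association of Definition \ref{d:path_base_map}: since the extension $\mathcal{A}(B,M)$ has $B$ as its associated base path, distinct pairs $(B,M)$ with good $B$ yield distinct extensions. This gives $P_k \ge (1-\varepsilon) P_0 \binom{t-2k}{k}$.

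For the asymptotic bound, expand
\[
\binom{t-2k}{k} = \frac{(t-2k)^k}{k!}\prod_{j=0}^{k-1}\left(1 - \frac{j}{t-2k}\right).
\]
Since $k \le 0.1 n$ and $t = \tfrac{n}{2} - o(n)$, we have $t-2k = \Theta(n)$, so the product is at least $e^{-O(k^2/n)}$. For $n \ge n_0(\varepsilon)$ the subtracted terms in $t$ are negligible, so $t/n \ge 0.49$, and using $\log\left(1-\tfrac{2k}{t}\right) = -O(k/n)$ multiplied by $k$ gives $(t-2k)^k \ge (0.49 n)^k e^{-O(k^2/n)}$. Combining yields $\binom{t-2k}{k} \ge \frac{(0.49)^k n^k}{k!} e^{-O(k^2/n)}$, which is the claimed asymptotic. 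The main obstacle is the injectivity step: one needs Definition \ref{d:path_base_map} to guarantee that extensions built from distinct good base paths yield distinct length-$(n+2k)$ paths, and the slightly slack count $\binom{t-2k}{k}$ (as opposed to the tighter $\binom{t-k+1}{k}$ afforded by pure non-adjacency of $M$) presumably reflects extra spacing required so the base-path recovery is unambiguous.
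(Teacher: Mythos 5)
Your argument follows the paper's route in all respects but one: bump base paths via \Cref{d:extending_paths} and \Cref{l:extending_paths_correctness}, invoke \Cref{l:straight_path} so that at least $(1-\varepsilon)P_0$ base paths have at least $t$ bumpable positions, count non-adjacent $k$-subsets of bumpable positions, and aggregate. The rank-gap selection is a slightly cleaner variant of the paper's ``lose at most $3$ candidates per pick'' count and yields the same $\binom{t-2k}{k}$ after deliberate slack, and the asymptotic simplification via $t\ge 0.49n$ for $n\ge n_0(\varepsilon)$ is fine.

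The gap is the injectivity step. You assert that $\mathcal{B}(\mathcal{A}(B,M))=B$ and appeal to \Cref{d:path_base_map}, but this identity is never established in the paper (and $\mathcal{B}$ is introduced later, for the $k\to k+l$ extension), so as written your aggregation step rests on an unproved claim. What the paper does is more direct and does not need $\mathcal{B}$ at all: under the hypotheses of \Cref{l:extending_paths_correctness}, in $\mathcal{A}(B,M)$ every $L$ is immediately followed by $UR$ and every $D$ by $RU$, so one can unambiguously collapse every occurrence of $LUR$ to $U$ and of $DRU$ to $R$, recovering $B$ and then $M$. That local pattern-recovery is exactly the inverse you need, and substituting it for your appeal to $\mathcal{B}$ closes the gap. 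Your closing speculation --- that the slack to $\binom{t-2k}{k}$ encodes extra spacing required for unambiguous base-path recovery --- is not right: recovery works for any valid $M$, and the slack is purely a convenient relaxation of the combinatorial count $t(t-3)\cdots(t-3(k-1))/k!$.
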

\begin{proof}
	Consider a path $B$ of length $n$ from $O$ to $P$. Let $B$ be represented by $b_1, \ldots, b_n$ where $b_i \in \set{R,U}$. Then using \Cref{d:extending_paths,l:extending_paths_correctness}, we can extend $B$ to a path $A = \mathcal{A}(B,M)$ of lenght $n+2k$ if we choose $M$ to be a set such that there are no adjacent indices in $M$ and further, for each $i \in M$, $b_{i-1} = b_i$.
	There are at least 
	\[ t = \frac{n}{2} - 2 \log n - \sqrt{n \log (1/\varepsilon)} \]
	such indices, for at least $(1 - \varepsilon)P_0$ many paths. For each of these paths, we need to choose a set of $k$ non-adjacent indices. This can be done in at least
	\begin{equation}
		\label{eq:non_adjacent_choice}
	  \frac{t(t-3)(t-6)\ldots(t-3(k-1))}{k!} \ge \frac{(t-2k)(t-2k-1) \ldots (t-3k+1)}{k!} = \binom{t-2k}{k}
	\end{equation}
	many ways, since after picking first index, we lost $3$ possible choices for rest of the indices.
	Further, observe that any longer path $A$ that is obtained in this way corresponds to exactly one shortest path $B$. We can find this path $B$ by looking at patterns $LUR$ and $DRU$ and replacing them by $U$ and $R$ respectively. If $M$ is choosen satisfying conditions of \Cref{l:extending_paths_correctness}, then it is clear that every $L$ in the extended path $A$ is followed by $UR$ and every $D$ in $A$ is followed by $RU$. Hence, these replacements can be made unambiguously. Since we can do this for all $(1-\varepsilon)P_0$ paths, we get the lower bound.
	\[ P_k \ge (1-\varepsilon)P_0 \binom{t-2k}{k} \]
	Since $2 \log n + \sqrt{n \log(1/\varepsilon)} = o(n)$, there is $n = n(\varepsilon)$ such that for all $n \ge n(\varepsilon)$, $2 \log n + \sqrt{n \log (1/\varepsilon)} \le 0.01 n$, and hence $t \ge 0.49 n$. This gives us the lower bound
	\[ P_k \ge (1-\varepsilon)P_0 \binom{0.49n-2k}{k} \]
	Using \Cref{eq:binomial_bounds}, we have
	\begin{align*}
		P_k
		& \ge (1-\varepsilon) P_0 \frac{(0.49)^k n^k}{k!} \exp\brac{\frac{-4k^2 - k^2 + k}{0.49 n} - \frac{2k(2k+k)}{0.49 n}} \\
		& \ge (1-\varepsilon) P_0 \frac{(0.49)^k n^k}{k!} \exp\brac{-\frac{25k^2}{n}} \\
		\implies P_k & \ge (1-\varepsilon) P_0 \frac{(0.49)^k n^k}{k!} \exp\brac{-O\brac{\frac{k^2}{n}}} 
	\end{align*}
	completing the proof of the lemma.
\end{proof}

The next task is to extend this result to get similar bounds for extending paths of length $n+2k$ to paths of length $n+2k+2l$. We will prove the following:

\begin{lemma}
	\label{l:path_k_extension}
	For any $k, l \le 0.1 n$ and $1 > \varepsilon \ge 0$, there is $n_0 = n_0(\varepsilon)$ such that for all $n \ge n_0(\varepsilon)$,
	\[ P_{k+l} \ge (1 - \varepsilon) P_k \binom{t - 8k - 3l}{l} \binom{k+l}{l}^{-1} \]
	where $t = \tfrac{n}{2} - 2 \log n - \sqrt{n \log (1/\varepsilon) + 2 k n \log n + 30k^2}$. Further, there is $n_1 = n_1(\varepsilon)$, such that for all $n \ge n_1$,
	\begin{equation}
		\label{eq:path_k_extension_bound}
		P_{k+1} \ge (1 - \varepsilon) P_k \frac{(0.49)^l n^l k!}{(k+l)!} \exp\brac{-O\brac{\frac{k(k+l)}{n}}}
	\end{equation}
\end{lemma}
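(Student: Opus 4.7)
The plan is to generalize the bumping argument of \Cref{l:path_extension_basic} so that the starting path has length $n+2k$ rather than $n$. Every path $A$ of length $n+2k$ corresponds uniquely to a pair $(B,M)$, where $B$ is its shortest base path and $M \subseteq [n]$ is the set of $k$ bumped indices (non-adjacent, each with $b_{i-1}=b_i$ in $B$), as in \Cref{d:extending_paths,l:extending_paths_correctness}. To extend $A$ to a path $A'$ of length $n+2(k+l)$ with the same base path, one enlarges $M$ to $M' \supset M$ of size $k+l$ whose $l$ new indices are good positions of $B$, non-adjacent to one another and to $M$.

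First comes a double count: each $A'$ with parameters $(B, M')$ arises from exactly $\binom{k+l}{k}=\binom{k+l}{l}$ choices of intermediate $M \subset M'$, since any $k$-subset of $M'$ yields a valid intermediate path $A$. Hence
\[
    P_{k+l} \;=\; \frac{1}{\binom{k+l}{l}}\sum_{A}\brac{\text{number of valid length-}l\text{ extensions of } A}.
\]
For a fixed valid $(B,M)$, write $G(B)$ for the set of good positions of $B$. The new indices must come from $G(B) \setminus N[M]$ (a set of size at least $|G(B)| - 3k$, where $N[M]$ is $M$ together with its two-sided neighbours) and must be pairwise non-adjacent, so repeating the count of \Cref{eq:non_adjacent_choice} yields at least $\binom{|G(B)\setminus N[M]| - 2l}{l}$ valid extensions.

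The main step is to show that for $(1 - \varepsilon)$ of all length-$(n+2k)$ paths $A$, we have $|G(B) \setminus N[M]| \ge t - 8k - l$, making the above count at least $\binom{t - 8k - 3l}{l}$. Rather than wrestling with the biased distribution of the base path $B$ given uniform $A$, I would fix $M$, take $B$ uniform among shortest paths, and count the good positions of $B$ that lie outside of $N[M]$; the Hoeffding argument of \Cref{l:straight_path} applies essentially verbatim, since conditioning out the $3k$ positions adjacent to $M$ changes the expectation by $O(k)$ and still gives a sum of nearly-independent Bernoulli variables. A union bound over the $\binom{n}{k} \le n^k$ choices of $M$ replaces $\varepsilon$ by $\varepsilon/n^{O(k)}$ in the tail probability, and combined with the conservative Hoeffding constant used in \Cref{l:straight_path} this accounts for exactly the $2kn\log n$ term inside the square root of $t$; the $30k^2$ absorbs the shift in the expectation and residual binomial corrections arising from $P_k \ge (1-o(1))P_0\binom{0.49n - 2k}{k}$ of \Cref{l:path_extension_basic}.

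The asymptotic form \Cref{eq:path_k_extension_bound} then follows by applying the binomial estimate $\binom{0.49 n - O(k+l)}{l} \ge \frac{(0.49n)^l}{l!}\exp(-O((k+l)^2/n))$ together with $\binom{k+l}{l}^{-1} = k!/(k+l)!$, exactly as in the closing computation of \Cref{l:path_extension_basic}; the $k(k+l)/n$ in the exponent is the combined correction from the $(k+l)^2/n$ binomial error and the $O(k^2/n)$ error absorbed into the square root. The principal obstacle is the third step: one must balance the conditional Hoeffding constant against the union bound over $M$ so that the resulting threshold fits inside the single term $2kn\log n$ under the square root, rather than a larger polynomial-in-$n$ loss that would spoil the asymptotic when $k$ approaches $\sqrt{n/\log n}$.
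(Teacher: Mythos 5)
Your proposal rests on a false structural premise: you assert that ``every path $A$ of length $n+2k$ corresponds uniquely to a pair $(B,M)$, where $B$ is its shortest base path and $M\subseteq[n]$ is the set of $k$ bumped indices.'' This is not the case. A self-avoiding walk of length $n+2k$ from $O$ to $P$ need not have the form $\mathcal{A}(B,M)$ at all --- it can wander far outside the staircase structure produced by bumping, and the ``bumps'' need not be recoverable by the local $LUR\mapsto U$, $DRU\mapsto R$ replacement rule. The exact identity $P_{k+l} = \binom{k+l}{l}^{-1}\sum_A(\text{extensions of }A)$ therefore fails in both directions, and the plan of fixing $M$, averaging over uniform $B$, and unioning over $\binom{n}{k}$ choices of $M$ never addresses the paths $A$ that are not of the form $\mathcal{A}(B,M)$. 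The whole point of the machinery in \Cref{d:path_base_map,d:good_edge_mapping,l:good_edge_mapping} is to assign a ``base path'' $\mathcal{B}(A)$ and a ``good edge mapping'' $\mathcal{F}_A$ to an \emph{arbitrary} path $A$ of length $n+2k$, by following the boxed construction rather than by inverting bumps. Without that, your argument cannot produce the $P_k$ on the right-hand side of the inequality; it would only lower-bound $P_{k+l}$ by a count over a strict subset of length-$(n+2k)$ paths.

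Two further points where your accounting diverges from what is needed. First, you deduct only $3k$ positions (the set $N[M]$) from the good positions before choosing the $l$ new bumps; the paper's \Cref{l:extending_paths_k_correctness} shows that each of the $2k$ non-good edges (the $k$ reverse edges plus $k$ bad forward edges) can block up to four good forward edges from being bumped, hence the loss is $8k$, not $3k$. Your smaller constant is not defensible without a finer argument about which blockings actually arise when $A$ happens to be a bumped shortest path. Second, your route to the ``bad base paths are rare'' step --- a conditional Hoeffding bound plus a union bound over $M$ --- is a real departure from the paper, which instead bounds $|\mathcal{B}^{-1}(B)| \le 2\cdot 3^{2k}\binom{n+2k}{2k}$ (\Cref{l:base_path_inverse_bound}) and combines it with \Cref{l:straight_path} and the already-established lower bound on $P_k$ from \Cref{l:path_extension_basic}. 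Your version might be made to work \emph{if} the bijection existed, but since it does not, you need the preimage bound rather than the union bound over bump sets. In short: introduce the base-path map for general $A$ and the good/bad edge dichotomy, replace your $3k$ with the correct $8k$ blocking count, and replace the union-over-$M$ step with the preimage-size bound.
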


The outline of proof of this lemma will be similar to \Cref{l:path_extension_basic}. Consider a path $A$ of length $n+2k$ from $O$ to $P$.
We want to show that for a large number of sets $M = \set{i_1, \ldots, i_k}$, we can construct the extended path $C = \mathcal{A}(A,M)$.
To ensure we can find a large number of candidates for $M$, we will associate a shortest path to each path $A$. We define a map $\mathcal{B}$ in \Cref{d:path_base_map} such that $\mathcal{B}(A)$ gives us such a shortest path.
We further associate each the edges of $B = \mathcal{B}(A)$ to some of the edges of $A$, and we call these the \emph{good edges} of $A$ and all other edges of $A$ as \emph{bad edges} of $A$. This mapping is defined in \Cref{d:good_edge_mapping}.
We claim that the set of indices where we cannot do modifications in the extension procedure defined in \Cref{d:extending_paths} corresponds to either a corner of $B$ or a \emph{bad edge} of $A$. Then we can bound the number of corners and bad edges to get the bound required.

We begin the proof begin by defining \emph{lattice boxes} to make notation easier, and then use those to define the map $\mathcal{B}$.

\begin{definition}
	\label{d:lattice_box}
	Given points $P_1, P_2 \in \Z^2$, such that $P_1 \le P_2$, we define the \emph{lattice box} $\mathcal{R}(P_1, P_2)$ with left bottom corner $P_1$ and right top corner $P_2$ to be the rectangle with sides parallel to the axis with $P_1$ and $P_2$ as diagonally opposite corners.  To be precise, 
	\[ \mathcal{R}(P_1, P_2) = \set{x \in \Z^2 \st P_1 \le x \le P_2} \]
	We further define \emph{boundary} of a lattice box (and more generally of any set $S \subseteq \Z^2$) to be the set of vertices $v \in S$ such that $v$ has at least one neighbor outside $S$ in the infinite grid graph.
\end{definition}

\begin{definition}
	\label{d:path_base_map}
	We define the map $\mathcal{B}$ as follows. Consider a path $A$ given by points $O = A_0, \ldots, A_{n+2k} = P$ from $O = (0,0)$ to $P = (n_1,n_2)$ with $n_1,n_2 \ge 0$ and $n = n_1 + n_2$.
	We will build $\mathcal{B}(A) = B$ inductively, starting at $O = (0,0)$. We will do this by constructing a sequence of points $R_i$ which will all lie in the intersection $A \cap B$. Let $R_0 = O$. Suppose we have constructed $R_0, \ldots, R_i$.
	\begin{enumerate}[nosep]
		\item Construct a box $\mathcal{R}_i = \mathcal{R}(R_i,P)$ with $R_i$ as the bottom left corner and $P$ as the top right corner.
		\item Find the next point $R_{i+1}$ on $A$, after $R_i$ such that $R_{i+1} \in \mathcal{R}_i$.
		\item Extend $B$ to $R_{i+1}$ using the shortest path along the boundary of $\mathcal{R}_i$ if $R_{i+1} \neq P$.
		\item If $R_{i+1} = P$, then let $\bar{A}$ be part of $A$ between $R_i=(R_i(x),R_i(y))$ and $P$.
			\begin{itemize}[nosep]
				\item If $\bar{A}$ intersects $y = n_2$ before $x = n_1$, define $\bar{R} = (R_i(x), n_2)$
				\item Otherwise define $\bar{R} = (n_1, R_i(y))$.
			\end{itemize}
			Extend $B$ from $R_i$ to $\bar{R}$ to $P$.
	\end{enumerate}
\end{definition}


\begin{lemma}
	\label{l:path_base_map}
	The map $\mathcal{B}$ in \Cref{d:path_base_map} is well defined.
\end{lemma}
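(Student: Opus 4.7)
The plan is to verify that each step of the inductive construction of $B = \mathcal{B}(A)$ is unambiguous, so that the output is uniquely determined by $A$. I would organize the argument around three potential sources of ambiguity: existence and uniqueness of the next anchor $R_{i+1}$, the claim that $R_{i+1}$ lies on $\partial \mathcal{R}_i$ so that the boundary-extension instruction is meaningful, and uniqueness of the shortest path along $\partial \mathcal{R}_i$ from $R_i$ to $R_{i+1}$. Each $R_{j+1}$ lies in $\mathcal{R}_j$ and hence satisfies $R_{j+1} \ge R_j$ componentwise, so inductively $R_i \le P$ and $P \in \mathcal{R}_i$. Since $P$ appears on $A$ strictly after $R_i$, the set of indices $j > i$ with $A_j \in \mathcal{R}_i$ is nonempty, and its minimum gives a uniquely determined $R_{i+1}$. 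Termination is automatic because the index along $A$ strictly increases and $A$ has finitely many vertices, so after finitely many steps $R_{i+1} = P$ and the terminal branch of the definition fires.

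Next I would argue $R_{i+1} \in \partial \mathcal{R}_i$ by splitting into two cases. If the predecessor of $R_{i+1}$ on $A$ equals $R_i$, then $R_{i+1}$ is a grid neighbor of the bottom-left corner $R_i$ of $\mathcal{R}_i$ lying in the box, so $R_{i+1} \in \{R_i + (1,0),\, R_i + (0,1)\}$, each of which sits on the bottom or left edge. Otherwise, minimality of $R_{i+1}$ forces its predecessor on $A$ to lie outside $\mathcal{R}_i$, so $R_{i+1}$ has a neighbor outside the box and therefore lies on $\partial \mathcal{R}_i$. For the shortest-path uniqueness I would case-split on which edge of $\partial \mathcal{R}_i$ contains $R_{i+1}$ and compare the lengths of the two boundary arcs from $R_i$ to $R_{i+1}$. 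A direct calculation shows that whenever $R_{i+1} \neq P$, exactly one arc has length equal to the Manhattan distance $(x_{i+1} - R_i(x)) + (y_{i+1} - R_i(y))$---the counterclockwise arc if $R_{i+1}$ lies on the left or top edge, the clockwise arc if it lies on the bottom or right edge---while the other arc makes a detour past $P$ and is strictly longer. Thus the shortest boundary arc is unique and is itself a monotone shortest grid path from $R_i$ to $R_{i+1}$.

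The main obstacle is the terminal case $R_{i+1} = P$, where both boundary arcs from $R_i$ to $P$ have the same length and the definition breaks the tie by inspecting whether $\bar{A}$ first meets the line $y = n_2$ or the line $x = n_1$. I would verify that this comparison is always well-posed by noting that the predecessor of $P$ on $A$ is a grid neighbor of $P = (n_1, n_2)$ and therefore lies on exactly one of these two lines; hence at least one line is hit strictly before the index of $P$, and the two first-hit times cannot both equal the index of $P$. The smaller first-hit time unambiguously selects $\bar{R}$, and the resulting extension $R_i \to \bar{R} \to P$ along two axis-aligned segments is a monotone shortest path.

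Finally I would close the argument by telescoping $\sum_i \|R_{i+1} - R_i\|_1 = \|P - O\|_1 = n$, which combined with the monotonicity of every segment added to $B$ shows that $B$ is a shortest, hence simple, path from $O$ to $P$. This confirms that $\mathcal{B}(A)$ is a well-defined shortest path.
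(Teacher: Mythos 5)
Your argument matches the paper's proof in both structure and substance: existence of $R_{i+1}$ because $P \in \mathcal{R}_i \cap A$, uniqueness of the boundary shortest path when $R_{i+1} \neq P$ (only $P$ is antipodal to $R_i$ on $\partial\mathcal{R}_i$), and, for the terminal step, the fact that $\bar{A}$ cannot first meet the lines $y=n_2$ and $x=n_1$ at the same index. Your write-up is somewhat more explicit---notably the predecessor-of-$P$ observation that pins down why one of the two first-hit times is strictly smaller, and the closing telescoping check that the output is indeed a shortest path---but the route is the same as the paper's.
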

\begin{proof}
	Given $R_i \neq P$, we can always find $R_{i+1}$ since $P \in \mathcal{R}_i$ and $P \in A$, so $A$ eventually intersects $\mathcal{R}_i$. Therefore, steps $(1,2)$ in \Cref{d:path_base_map} are well defined. For step $(3)$, observe that $P$ is the only point on boundary of $\mathcal{R}_i$ that has two shortest paths from $R_i$ along the boundary. Therefore, $(3)$ is well defined as long as $R_{i+1} \neq P$.

	For step $(4)$, observe that if $\mathcal{R}_i$ is degenerate, then there is a unique path from $R_i$ to $P$, and this step is well defined. Suppose $\mathcal{R}_i$ is non-degenerate. That is, $R_i$ and $P$ differ at both $x$ and $y$ coordinates. In this case, $\bar{A}$ cannot intersect both the lines $y = n_2$ and $x = n_1$ simultaneously, and it must intersect both of them eventually. Hence, step $(4)$ is well defined as well.
\end{proof}

We now define the \emph{good edge mapping}. First, we will start by making a few notational definitions.

\begin{definition}
	\label{d:path_corner}
	Given a path $A$ from $O$ to $P$ with points $O = A_0, \ldots, A_{n+2k} = P$, we can represent it as a sequence of moves, $a_1 \ldots a_{n+2k}$, where each move is one of the four directions ($U,D,L,R$). We say that $i^{\text{th}}$ point ($A_i$) on this path is a \emph{corner} if $a_i \neq a_{i+1}$. We further include $O$ and $P$ to be corner points.

	We define \emph{last corner point} to be the corner point $Q \neq P$ with highest index. We will also refer to $O$ as the \emph{starting point} and to $P$ as the \emph{ending point}.
\end{definition}

\begin{definition}
	\label{d:path_edges_types}
	Let $A$ be a path of length $n + 2k$ from $O = (0,0)$ to $P = (n_1, n_2)$, where $n = n_1 + n_2$. Let $A$ be given by point $O = A_0, \ldots, A_{n+2k} = P$. Then, we divide edges of $A$ into two categories. Any edge going in the directions $D$ or $L$ will be reffered to as a \emph{reverse edge}, and any edge going in the direction $U$ and $R$ will be reffered to as a \emph{forward edge}.
\end{definition}

\begin{definition}
	\label{d:good_edge_mapping}
	In the setting described in the previous definition, let $B = \mathcal{B}(A)$, where $\mathcal{B}$ is defined in \Cref{d:path_base_map}.
	Let $B$ be given by $O = B_1, \ldots, B_n = P$. We define a \emph{good edge mapping} to be any function $\mathcal{F}_A: \Z_{[0,n-1]} \to \Z_{[0,n+2k-1]}$, where $\Z_{[0,t]} = \Z \cap [0,t]$ satisfying
	\begin{enumerate}[nosep]
		\item $\mathcal{F}_A$ is injective.
		\item For $i < j$, $\mathcal{F}_A(i) < \mathcal{F}_A(j)$.
		\item The edges $A_{\mathcal{F}_A(i)} A_{\mathcal{F}_A(i) + 1}$ and $B_i B_{i+1}$ are \emph{super-parallel}, that is 
			\begin{itemize}[nosep]
				\item If edge $B_i B_{i+1} = (x,y) \to (x,y+1)$, the edge $A_{\mathcal{F}_A(i)} A_{\mathcal{F}_A(i) + 1} = (\bar{x},y) \to (\bar{x},y+1)$ for some $\bar{x}$.
				\item If edge $B_i B_{i+1} = (x,y) \to (x+1,y)$, the edge $A_{\mathcal{F}_A(i)} A_{\mathcal{F}_A(i) + 1} = (x,\bar{y}) \to (x+1,\bar{y})$ for some $\bar{y}$.
			\end{itemize}
	\end{enumerate}
	Given such a mapping $\mathcal{F}$, we will refer to any edge of form $A_{\mathcal{F}(i)} A_{\mathcal{F}(i) + 1}$ to be a \emph{good forward edge}, and any edge that is not a good forward edge as a \emph{bad forward edge}.
\end{definition}

\begin{figure}
	\centering
	\includesvg[width=\textwidth]{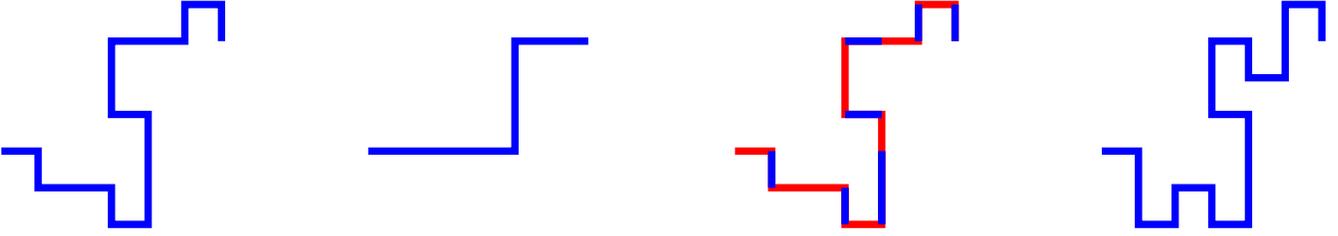}
	\caption{Illustrations for \Cref{d:path_base_map,d:good_edge_mapping}. First image shows a non-shortest path $A$, second image is the \emph{base path} $\mathcal{B}(A)$, third image indicates good forward edges in green, and forth image is the path obtained by bumping at indices $3,14$.}
	\label{f:extension_basic_longer}
\end{figure}

\Cref{f:extension_basic_longer} illustrates the definitions above. We show that such a mapping exists in the lemma below.
\begin{lemma}
	\label{l:good_edge_mapping}
	Given a map $A$ of length $n + 2k$ and let $B = \mathcal{B}(A)$. Using notation in \Cref{d:path_edges_types,d:good_edge_mapping}, there exists a good edge mapping $\mathcal{F}$ satisfying conditions in \Cref{d:good_edge_mapping}.
\end{lemma}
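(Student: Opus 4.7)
The plan is to build $\mathcal{F}$ piecewise using the synchronization points $R_0, R_1, \ldots, R_m$ from \Cref{d:path_base_map}. Each $R_j$ is a vertex of both $A$ and $B$, so it suffices to construct an order-preserving super-parallel matching within each segment $R_j \to R_{j+1}$; concatenating these segment-matchings then yields a globally injective, order-preserving $\mathcal{F}$, since the segments partition both $A$ and $B$ at common vertices.

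Fix one segment $R_j \to R_{j+1}$. By construction $B$'s portion on this segment is a shortest path along the boundary of $\mathcal{R}_j$ and therefore consists of at most one U-run together with at most one R-run, while the corresponding portion of $A$ stays strictly outside $\mathcal{R}_j$ between its endpoints (this is what $R_{j+1}$ being the \emph{next} $A$-vertex in $\mathcal{R}_j$ means). For every R-edge of $B$ with x-range $[x_0, x_0+1]$, a net-displacement count shows that $A$'s segment crosses the vertical line $x = x_0 + \tfrac{1}{2}$ with net one rightward crossing, so it contains at least one R-edge of $A$ with x-range $[x_0, x_0+1]$; analogously for each U-edge and the corresponding horizontal line.

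To make this matching order-preserving within the segment, I would define the \emph{essential} R-crossing at $x = x_0 + \tfrac{1}{2}$ to be the R-step immediately following the last visit of $A$'s segment to the half-plane $\{x \le x_0\}$, and analogously the essential U-crossings. Since the last-visit times are monotone in the threshold, the essential R-crossings are automatically ordered by $x_0$, and likewise the essential U-crossings by $y_0$. Therefore $B$'s R-edges (respectively U-edges) can be matched in order to the essential R-crossings (respectively U-crossings) of $A$'s segment.

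The main obstacle is the case in which $B$'s segment rounds a corner of $\mathcal{R}_j$ and thus contains both a U-run and an R-run, because then one must also show that the essential U-crossings and essential R-crossings of $A$'s segment interleave in the same order as $B$'s U-run and R-run. I expect this to follow from a planar argument: $A$'s detour is confined to the exterior of $\mathcal{R}_j$, so the detour must wrap around the specific corner of $\mathcal{R}_j$ that $B$ passes through, and tracking the detour's entry and exit points in the two coordinate strips outside $\mathcal{R}_j$ should pin down the required interleaving of essential crossings, completing the per-segment construction and hence the construction of $\mathcal{F}$.
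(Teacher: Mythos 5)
Your approach is the same as the paper's: build $\mathcal{F}$ segment by segment between the synchronization points $R_0,\ldots,R_m$ of \Cref{d:path_base_map}, and within each segment match each edge of $B$ to a super-parallel edge of $A$, arguing monotonicity from the fact that $A$'s sub-segment must cross the relevant lattice lines in order. Your choice of the \emph{last} crossing (via the last visit to a half-plane) versus the paper's \emph{least index} super-parallel edge is an inessential variation; both choices yield monotonicity within a single straight run of $B$ by the same one-dimensional argument.

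The point you flag as "the main obstacle" --- a segment of $B$ that rounds a corner of $\mathcal{R}_i$, so you must show the essential U-crossings all precede (or all follow) the essential R-crossings --- is indeed the crux, and you should carry it out rather than leave it as an expectation. The mechanism is exactly the one the paper uses for the final segment $R_{m-1}\to P$ (which always rounds a corner at $\bar R$ unless $\mathcal{R}_{m-1}$ is degenerate): let $Q$ be the \emph{first} point of $A$'s sub-segment lying on the boundary line that $B$'s corner sits on, say $y = n_2$. Then the portion of $A$ from $R_{m-1}$ to $Q$ monotonically witnesses all U-edges of $B$ (since $A$ must climb from $y_1$ to $n_2$, crossing every intermediate height at least once), while the portion from $Q$ to the endpoint witnesses all R-edges (since $Q$'s $x$-coordinate $x_0$ satisfies $x_0 \le R_{m-1}(x)$, as $Q$ lies outside $\mathcal{R}_{m-1}$, so $A$ must then travel rightward across every vertical line in $[R_{m-1}(x), n_1]$). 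Matching U-edges within the pre-$Q$ phase and R-edges within the post-$Q$ phase gives the interleaving automatically. Your "tracking entry and exit points in the coordinate strips" is groping toward this, but the single point $Q$ is all that's needed --- you don't need to track the whole excursion. (Note also that the paper's own monotonicity argument for \emph{intermediate} segments is stated under a "without loss of generality" that implicitly assumes the segment of $B$ is a single straight run; if an intermediate $R_{i+1}$ lands on the top or right side of $\mathcal{R}_i$, the boundary path rounds a corner and the same $Q$-splitting argument is needed there as well, so your instinct that this case deserves an explicit argument is sound.)
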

\begin{proof}
  First, it immediately follows from \cref{d:path_base_map,d:path_corner} that all the corners of path $B$ are contained in the set $\set{O = R_0, R_1, \ldots, R_m = P, \bar{R}}$ \wpnote{notaton issue? we are re-using $A$? throughout this proof} \asnote{replacing $R_A$ by $R_m$}, since the portions of $B$ in between these points are straight lines. Now, we define the mapping $\mathcal{F} = \mathcal{F}_A$ for parts of $B$ between $R_i$ and $R_{i+1}$ for $0 \le i \le m-2$, for each edge $B_j B_{j+1}$ between $R_i R_{i+1}$ in $B$, we define $\mathcal{F}(j) = k$ to be the least index such that $A_k A_{k+1}$ and $B_j B_{j+1}$ are super-parallel, that is, they satisfy the condition $(3)$ in \Cref{d:good_edge_mapping}.

	We claim that this is strictly monotonic for each $i$. Suppose not, then there is an index $j$ such that such that $\mathcal{F}(j+1) \le \mathcal{F}(j)$. If $\mathcal{F}(j+1) = \mathcal{F}(j)$, then edges $B_jB_{j+1}$ and $B_{j+1}B_{j+2}$ are super-parallel, which is a contradiction.
	Without loss of generality, let the points $R_i, B_j, B_{j+1}, R_{i+1}$ share the same $x$ coordinate, that is, let $R_i = (x_0,y_0)$, $B_j = (x_0,y_1)$, $B_{j+1} = (x_0,y_1+1)$ and $R_{i+1} = (x_0, y_2)$. Then $A_{\mathcal{F}(j+1)} = (x_1, y_1+1)$ for some $x_1$. Then the path from $R_i = (x_0,y_0)$ to $(x_1, y_1+1)$ must have an edge of the form $(x_2, y_1) \to (x_2,y_1+1)$ since $y_0 \le y_1$. Therefore, there is an index $k < \mathcal{F}(j+1)$ such that $A_k A_{k+1}$ is super-parallel to the edge $B_j B_{j+1}$, which implies $\mathcal{F}(j) < \mathcal{F}(j+1)$, a contradiction!

	If the path between $R_{m-1}$ and $R_{m} = P$ is straight line, we can extend the definition above when $i = A-1$. Otherwise, the point $\bar{R}$ is well defined. Let $\bar{A}$ be portion of $A$ between $R_{m-1}$ and $P$. Without loss of generality, let $\bar{A}$ intersect the line $y = n_2$ before the line $x = n_1$ at a point $Q$.
	Suppose $Q = (x_0,n_2)$, then $x_0 < n_1$, otherwise the path from $R_{m-1}$ to $Q$ will intersect the line $x = n_1$. Since $Q$ is also outside $\mathcal{R}(R_{m-1},P)$, it follows that $x_0 < x_1$ where $R_i = (x_1,y_1)$.

	Now, for all $B_j$ between $R_{m-1}$ and $\bar{R}$, we define $\mathcal{F}(j) = k$ where $k$ is the smallest index such that $A_k$ is between $R_{m_1}$ and $Q$ such that $B_jB_{j+1}$ and $A_kA_{k+1}$ are super-parallel and for all $B_j$ between $\bar{R}$ and $P$, we define $\mathcal{F}(j) = k$ where $k$ is the smallest index such that $A_k$ is between $Q$ and $P$ such that $B_jB_{j+1}$ and $A_kA_{k+1}$ are super-parallel.

	This map is well defined and monotonic since $\bar{A}$ must go from $y = y_1$ to $y = n_2$, and then from $x = x_0$ to $x = n_1$, and hence edges super parallel to $B_jB_{j+1}$ exists for all $B_j$ between $R_{m-1}$ and $P$. Further, the map is strictly monotonic by an argument earlier in the proof. This gives us the \emph{good edge mapping} that we want.
\end{proof}

The next lemma proves that a large number of \emph{good edges} can be bumped.

\begin{lemma}
	\label{l:extending_paths_k_correctness}
	Consider a path $A$ of length $n + 2k$. Let $B = \mathcal{B}$ be the base path associated with it. Suppose $B$ has $c$ corners. Then there is a set $G$ of indices of at least $n - c - 8k$ good edges in $A$ which can be \emph{bumped}.
\end{lemma}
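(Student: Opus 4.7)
The plan is to bound the number of good edges that \emph{cannot} be bumped by $c + 8k$, which immediately yields the required lower bound $|G| \ge n - c - 8k$. I will split the obstructions into two disjoint classes according to whether they originate from the base path $B$ or from the non-good edges of $A$.

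For the first class, I would apply an analog of \Cref{l:extending_paths_correctness} through the good-edge mapping $\mathcal{F}$. A good edge $A_{\mathcal{F}(i)} A_{\mathcal{F}(i)+1}$ is super-parallel to $B_i B_{i+1}$, and the detour added when bumping this edge lies on a specific side determined by its direction ($L$ for a $U$-edge, $D$ for an $R$-edge). Provided $B_{i-1} B_i$ and $B_i B_{i+1}$ share the same direction, the argument in \Cref{l:extending_paths_correctness} ensures neither detour vertex lies on $B$, and I would further argue that the super-parallelism of \Cref{d:good_edge_mapping} combined with the staircase structure of $B$ from \Cref{d:path_base_map} prevents these detour vertices from lying on any other good edge of $A$. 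Since $B$ has $c$ corners, the direction-matching condition fails at most $c$ times, contributing at most $c$ obstructions.

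For the second class, I count obstructions arising when a detour vertex coincides with a vertex visited by a non-good edge of $A$. Since $A$ has $n + 2k$ edges and exactly $n$ are good, there are $2k$ non-good edges, contributing at most $4k$ endpoint instances. A direct geometric count shows that each lattice vertex is a detour vertex of at most two good-edge bumps once one respects which of the four adjacent edges can actually be bumped into it, so at most $8k$ additional good edges are invalidated by this mechanism.

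Adding the two contributions gives at most $c + 8k$ obstructions in total, hence at least $n - c - 8k$ good edges admit a valid bump. The main obstacle in the write-up will be the first class: carefully establishing that, whenever the corner condition at $B$ holds, the two detour vertices of a good-edge bump avoid \emph{every} edge of $A$ aside from the non-good ones already accounted for in the second class. This relies on the super-parallelism structure enforced by $\mathcal{F}$ together with a geometric analysis of how $B$ sits inside the nested lattice boxes $\mathcal{R}(R_i, P)$ from \Cref{d:path_base_map}, so that an excursion of $A$ that reaches a detour vertex must use a non-good edge to do so.
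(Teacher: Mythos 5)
Your overall strategy matches the paper's: bound the number of good edges that cannot be bumped by charging each failure to either a corner of $B$ (at most $c$) or to a non-good edge of $A$ (the $k$ reverse edges and $k$ bad forward edges, each endpoint able to interfere with at most two bumps, giving $8k$). The arithmetic lines up exactly with the paper's count.

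However, the step you explicitly flag as ``the main obstacle in the write-up'' is in fact the entire content of the lemma, and it is left unproven. You need to establish that your two classes are \emph{exhaustive}: that it is impossible for a good edge $A_{\mathcal{F}(i)}A_{\mathcal{F}(i)+1}$ to have a detour vertex occupied by another \emph{good} edge of $A$ while the corner condition at $B_i$ holds. The paper proves this by a case analysis on the edge of $A$ incident to the occupied detour vertex. Two cases are nontrivial and require the structure of $\mathcal{F}$: (i) if the interfering edge at the detour vertex is a good forward edge \emph{parallel} to $A_{\mathcal{F}(i)}A_{\mathcal{F}(i)+1}$, then both are super-parallel to the same edge of the monotone base path $B$, contradicting that $\mathcal{F}$ is a (single-valued, injective) function; (ii) if the interfering edge is a good forward edge \emph{perpendicular} to it and consecutive with it in $A$, then strict monotonicity of $\mathcal{F}$ forces the two corresponding edges of $B$ to be consecutive, which exhibits a corner of $B$ to charge. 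Neither of these inferences follows from ``super-parallelism plus the staircase/box structure'' at the level of generality you invoke; you genuinely need to trace the consequences of the monotone injection $\mathcal{F}$. As written, the argument for why Class 1 plus Class 2 covers every blocked bump is a placeholder, so the bound $n - c - 8k$ does not yet follow.

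One smaller remark: you would apply ``an analog of \Cref{l:extending_paths_correctness}'' to conclude the detour vertices avoid $B$, but what you actually need is that they avoid $A$. That lemma's hypothesis (about a \emph{shortest} path) does not transfer directly to the non-shortest path $A$; the whole point of introducing $\mathcal{F}$ is to route information about $B$ into statements about $A$, and the bridge is precisely the missing case analysis.
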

\begin{proof}
	Note that $A$ has exactly $n$ good forward edges, $k$ bad forward edges and $k$ reverse edges. Now, we transverse $A$, and for each good forward edge, we check if we can \emph{bump} the good forward edge.
	To be presice, consider a good forward edge $S_1S_2$. Without loss of generality, we will assume that the edge goes in $U$ direction, and is given by $(x_0, y_0) \to (x_0,y_0 + 1)$.

	Suppose $S_1S_2$ is a good forward edge that cannot be bumped. We will associate either
	\begin{enumerate}[nosep]
	  \item a reverse edge
		\item a bad forward edge
		\item or a corner of $B$
	\end{enumerate}
	as the reason why bumping at $S_1$ is blocked.
	Since $S_1S_2$ cannot be bumped, either $S_3 = (x_0 - 1, y_0)$ is in $A$ or $S_4 = (x_0 - 1, y_0 + 1)$ is in $A$. 

	First, consider the case when $S_3$ is contained in $A$. Look at the edge $e$ going out of $S_3$ in $A$. We have following cases:
	\begin{enumerate}[nosep]
		\item If there is no such edge, then $S_3 = P$. In this case, we say that $P$ blocks bumping at $S_1$.
		\item If the edge $e$ is either a reverse edge or a bad forward edge, then we say that this edge blocks bumping at $S_1$.
		\item If the edge $e$ is going in $U$ direction and is a good forward edge, then there is an unique edge $f \in B$ that is obtained by moving $e$ and $S_1S_2$ perpendicular to their respective directions. This contradicts the definition of $\mathcal{F}$.
		\item If the edge is going in $R$ direction and is a good forward edge, $S_3 S_1 S_2$ are consecutive in $A$. Let $j$ be such that $A_j = S_3, A_{j+1} = S_1$ and $A_{j+2} = S_2$. Since these are good forward edges, there is $i$ such that $\mathcal{F}(i) = j$. Since $\mathcal{F}$ is strictly monotonic, $\mathcal(i+1) = j+1$. Therefore, $B_{i+1}$ is a corner point in $B$. In this case, we say that the corner point $B_{i+1}$ is blocking the bumping at $S_1$.
	\end{enumerate}

	Now, suppose $S_4$ is contained in $A$. Look at the edge $e$ going into $S_4$ in $A$. We again that $4$ cases:
	\begin{enumerate}[nosep]
		\item If there is no such edge, then $S_4 = O$. In this case, we say that $O$ is blocking bumping at $S_1$.
	  \item If the edge $e$ is either a reverse edge or a bad forward edge, then we say that this edge is blocking the bump at $S_1$.
		\item If the edge $e$ is going in $U$ direction and is a good forward edge, then it is exactly the same edge as the one considered in case $(3)$ above.
		\item If the edge $e$ is going in $R$ direction, then both $e$ and $S_1S_2$ end at $S_2$, which cannot happen as $A$ is a path.
	\end{enumerate}

	Each reverse forward edge or backward edge can block at most $4$ good forward edges from bumping, two in each direction, one where it is blocking $S_3$ and one where it is blocking $S_4$. On the other hand, each corner including $O$ and $P$ can block at most one edge. Therefore, there are at least $n - c - 8k$ good forward edges which can be bumped, completing the proof. 
	\asnote{The bound can be improved using a slightly more careful argument while deciding the blocking edges. Some casework can potentially lead to a one to one association between blocking edges and good edges that cannot be bumped.}
\end{proof}

In order to finish the proof of \Cref{l:path_k_extension}, we need a bound on number of paths $A$ of length $n+2k$ such that the base path $B = \mathcal{B}(A)$ has a large number of corners. We will do this by bounding the number of paths $A$ such that $\mathcal{B}(A) = B$, and then using \Cref{l:straight_path} to bound number of paths $B$ with a large number of corners. We will give a rather trivial bound that suffices.

\begin{lemma}
	\label{l:base_path_inverse_bound}
	Given a shortest path $B$ and $k \le 0.1n$, the number of paths $A$ of length $n + 2k$ such that $\mathcal{B}(A) = B$ is at most
	\[ 2 \cdot 3^{2k} \binom{n + 2k}{2k}. \]
\end{lemma}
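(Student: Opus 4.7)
The strategy is to encode each path $A$ of length $n+2k$ with $\mathcal{B}(A)=B$ by a pair $(S,\sigma)$, where $S\subseteq\set{0,\dots,n+2k-1}$ is the set of $2k$ positions of the non-good edges of $A$ and $\sigma$ assigns a direction in $\set{U,D,L,R}$ to each position in $S$. By \Cref{d:good_edge_mapping,l:good_edge_mapping}, $A$ has exactly $n$ good edges and $2k$ non-good edges, so this data has the right shape. The map $A\mapsto(S,\sigma)$ is injective: once $S$ is fixed, strict monotonicity of $\mathcal{F}_A$ forces the $i$-th good edge of $A$ (in left-to-right order) to correspond to $B_iB_{i+1}$, and super-parallelism forces it to share that edge's direction, so all good-edge directions are read off from $B$ alone and $\sigma$ supplies the rest.

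The bound then reduces to counting such pairs. The number of choices for $S$ is $\binom{n+2k}{2k}$. For $\sigma$, I would invoke self-avoidance: no edge of $A$ can be the reverse of its immediate predecessor, for otherwise $A_{j+1}=A_{j-1}$ and $A$ is not a path. So each non-good edge lying at a position $j\ge 1$ has at most $3$ admissible directions once the predecessor is known (that predecessor's direction is either determined by $B$, if the preceding edge is good, or already assigned by $\sigma$, if it is non-good). The first edge of $A$, if it happens to be non-good, is constrained only to at most $4$ directions. In the worst case this gives $4\cdot 3^{2k-1}=\tfrac{4}{3}\cdot 3^{2k}\le 2\cdot 3^{2k}$ assignments for $\sigma$.

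Multiplying the two counts yields the stated bound $2\cdot 3^{2k}\binom{n+2k}{2k}$. The only delicate point is the decoding claim in the first paragraph: one has to fix a canonical $\mathcal{F}_A$ (for instance the smallest-index choice built in the proof of \Cref{l:good_edge_mapping}) so that the partition of edge positions into good and non-good is determined by $A$, and then verify that the directions of the good edges really are pinned down by $S$ and $B$ via super-parallelism. After that step the rest is routine counting, and since the enclosing arguments only need this lemma up to constants, I would not try to sharpen the $2\cdot 3^{2k}$ factor any further.
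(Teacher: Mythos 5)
Your proof is correct and takes essentially the same approach as the paper's: both encode a path $A$ with $\mathcal{B}(A)=B$ by choosing the $2k$ positions of the non-good edges and a direction for each, observe that the directions of the $n$ good edges are pinned down by $B$ via super-parallelism and the monotonicity of $\mathcal{F}_A$, and then bound the direction choices by $4$ for the first edge and $3$ for each subsequent edge using self-avoidance. The only cosmetic difference is that the paper splits into cases according to whether the first edge is among the chosen positions (arriving at $3^{2k}\binom{n+2k}{2k}+3^{2k-1}\binom{n+2k-1}{2k-1}$ and then bounding the second term), while you uniformly bound the direction count by $4\cdot 3^{2k-1}\le 2\cdot 3^{2k}$; both give the same final bound. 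You are also slightly more explicit about why the encoding is injective, which the paper leaves implicit.
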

\begin{proof}
First, we express $B$ as a sequence of directions of length $n$. Now, from $n + 2k$ positions, we choose $2k$ positions, and fill up the rest with the sequence of directions used in $B$. For the remaining $2k$ places, we have at most $3$ choices each since we canot leave in the direction we came from, unless we are picking the starting direction, in which case we might have $4$ choices. This gives an upper bound of
	\[ 3^{2k-1} \brac{4 \binom{n + 2k - 1}{2k - 1} + 3 \binom{n+2k-1}{2k}} = 3^{2k} \binom{n+2k}{2k} + 3^{2k-1} \binom{n+2k-1}{2k-1}\]
	since $\binom{n+2k-1}{2k-1} = \frac{2k}{n + 2k} \binom{n+2k}{2k} \le 3 \binom{n+2k}{2k}$ for $k \le 0.1n$, we get the result.
	\asnote{This bound can potentially be improved, for example, by looking at the number of corners of $B$.}
\end{proof}

Now we are in a position to finish the proof of \Cref{l:path_k_extension}.
\begin{proof}
	Recall that by \Cref{l:path_extension_basic}, there is $n_0 = n_0(\varepsilon)$ such that for all $n \ge n_0$,
	\[ P_k \ge \frac{1}{2} P_0 \frac{(0.49)^k n^k}{k!}\exp\brac{-\frac{25k^2}{n}} \]
	\wpnote{There seem to be some problems with the following sentence and I'm not sure what it is supposed to be claiming (and is it $+\sqrt{n\log(1/\varepsilon_1)}$ or $-\sqrt{n\log(1/\varepsilon_1)}$?)} \asnote{It should be at least} On the other hand, for any given $\varepsilon_1$, we have that the number of paths $A$ such that the base path $B = \mathcal{B}(A)$ has at least $\tfrac{n}{2} + 2 \log n + \sqrt{n \log (1/\varepsilon_1)}$ corners is upper bounded by
	\[ 2 \varepsilon_1 P_0 3^{2k} \binom{n+2k}{2k} \le 2 \varepsilon_1 P_0 \frac{3^{2k}n^{2k}}{(2k)!} \exp\brac{\frac{8k^2 - 4k^2 + 2k}{n}} \le 2 \varepsilon_1 P_0 \frac{3^{2k}n^{2k}}{(2k)!} \exp\brac{\frac{5k^2}{n}} =  T \]
	Hence, if we choose $\varepsilon_1$ such that
	\[ \varepsilon_1 \le \frac{\varepsilon}{4} \cdot \frac{(0.49)^k (2k)!}{3^{2k} n^k k!} \exp\brac{\frac{-30k^2}{n}} \]
	or equivalently, if 
	\[ \log (1/\varepsilon_1) \ge \log (1/\varepsilon) + \log 4 + k \log n + 4k \log 3 - k \log k + \frac{30k^2}{n} \]
	It follows that there are at most $\varepsilon P_k$ paths $A$ of length $n+2k$ such that $B$ has at most 
	\[ \frac{n}{2} + 2 \log n + \sqrt{n \log (1/\varepsilon) + 2nk\log n + 30k^2} \]
	corners, when $k \le 0.1n$ and $n \ge 81$.
	Therefore, in this setting, every path $A$ has at least $t - 8k$ good edges which can be bumped where 
	\[ t = \frac{n}{2} - 2 \log n - \sqrt{n \log (1/\varepsilon) + 2kn \log n + 30k^2} \]
	Note that every edge that is bumped can prevent at most $3$ new edges from being bumped.
	For example, if we bump and edge that looks like $(x_0,y_0) \to (x_0,y_0+1)$ it can stop the edges $(x_0,y_0-1) \to (x_0,y_0)$, $(x_0,y_0+1) \to (x_0,y_0+2)$ and $(x_0-2, y_0+2) \to (x_0-1,y_0+2)$ from bumping, which it initially did not. Therefore, we can choose set $M$ of $l$ edges which can be bumped simultaneously in 
	\[ \frac{t(t-4)(t-8)\cdots(t-4(l-1))}{l!} \ge \frac{(t-3l) \cdots (t-4l+1)}{l!} = \binom{t-3l}{l} \]
	many ways. Further, each path of length $n+2k+2l$ can have $k+l$ bumps, and can potentially be obtained in $\binom{k+l}{k}$ many different paths of length $l$. This gives us the lower bound
	\[ P_{k+l} \ge (1 - \varepsilon) P_k \binom{t - 8k - 3l}{l} \binom{k+l}{k}^{-1} \]
	as required. Note that for $k \le \tfrac{n}{(\log n)^2}$, there exists $n(\varepsilon)$ such that for all $n \ge n(\varepsilon)$, $t \ge 0.49n$.
	Using \Cref{eq:binomial_bounds}, we get the simplified lower bound:
	\begin{align*}
	  P_{k+l}
		& \ge (1-\varepsilon) P_k \frac{(0.49)^l n^l k!}{(k+l)!} \exp\brac{-\frac{2(8k+3l)l - l^2 + l}{0.49n} - \frac{2l(8k+3l)}{0.49n}}\\
		& \ge (1-\varepsilon) P_k \frac{(0.49)^l n^l k!}{(k+l)!} \exp\brac{-\frac{32(kl+l^2)}{0.49n}} \\
		& \ge (1-\varepsilon) P_k \frac{(0.49)^l n^l k!}{(k+l)!} \exp\brac{-\frac{70(kl+l^2)}{n}} \\
		& \ge (1-\varepsilon) P_k \frac{(0.49)^l n^l k!}{(k+l)!} \exp\brac{-O\brac{\frac{(kl+l^2)}{n}}}
	\end{align*}
\end{proof}

\section{Number of Low Girth Walks in the Grid}
\label{s:low_girth_walks}
In this section, we will use the bounds obtained in the section above to compare the number of paths from $O = (0,0)$ to $P = (n_1,n_2)$ to the number of walks from $O$ to $P$ that do not have cycles of length less than $2l$. For the sake of notation, let $W_k^l$ denote the number of walks from $O$ to $P$ that do not have cycles of length less than $2l$. Then we have the following:
\begin{theorem}
	\label{t:long_cycle_bound}
	Given constants $C,\delta,\alpha \ge 0$, there exists $n(C,\delta,\alpha)$, such that for all $n \ge n(C,\delta,\alpha)$, and for all $k,l$ such that $k \le Cn^{1-\delta}$ and $l\delta > 1 + 2\alpha$,
	\begin{equation}
		\label{eq:long_cycle_bound}
	  P_k \le W_k^l \le \brac{1 + 16 n^{-\alpha}} P_k.
	\end{equation}
\end{theorem}
\begin{proof}
	We will show this by induction on $k$. Note that result holds for $0 \le k < l$ since in this setting, $W_k^l = P_k$. Suppose by induction hypothesis, $W_{\bar{k}}^l \le \brac{1 + 8n^{-\alpha}} P_{\bar{k}}$ for $0 \le \bar{k} < k$. Since every walk of length $n+2k$ with no cycles of length smaller than $2l$ is either or path or can be decomposed into a cycle of length $t\geq 2\ell$ and a walk of length $n+2k-t$ with no cycles of length smaller than $2l$, we get the following bound:
	\[ W_k^l \le P_k + \sum_{t=0}^{k-l} W_t^l 16^{k-t} (n+2t) \le P_k \sum_{t=0}^{k-l} \brac{1 + 8n^{-\alpha}} \cdot 2 \cdot P_t 16^{k-t} n.\]
	Here $16^t$ is a simple upper bound on the number of cycles of length $16^t$ through a fixed point. Note that for $t \le Cn^{1-\delta}$, $n+2t \le 2n$. Now, using \Cref{l:path_k_extension} with $\varepsilon = 0.5$, we have
	\begin{align*}
		\frac{P_t 16^{k-t} n}{P_k}
		& \le 2 \cdot \frac{k!}{t!} \cdot \frac{16^{k-t}n}{n^{k-t}(0.49)^{k-t}} \exp\brac{\frac{70(k-t)(k-t+t)}{n}} \\
		& \le 2 \exp\brac{(k-t)(\log k + \log 16 - \log n - \log(0.49)) + \frac{70(k-t)k}{n} + \log n} \\
		& \le 2 \exp\brac{(k-t)((1-\delta) \log n + \log C - \log n + \log 40) + \frac{70k(k-t)}{n} + \log n}.
	\end{align*}
	Let $k-t = l+r$, and let $l$ be an integer constant such that $l\delta > 1$, then we can upper bound the summation as below:
	\begin{align*}
		\frac{W_k^l}{P_k}
		& \le 1 + \sum_{r=0}^{k-l} \brac{1 + 8n^{-\alpha}} \cdot 4 \cdot \exp\brac{(1 - l\delta)\log n + C_1 l + - r \delta \log n + C_1 r + \frac{50k(l+r)}{n}} \\
		& \le 1 + 4 \brac{1 + 8n^{-\alpha}} \exp\brac[\bigg]{(1  - l\delta) \log n + C_1 l + 50lCn^{-\delta}} \brac{\sum_{r=0}^{k-l} \exp\brac{r\brac{-\delta \log n + C_1 + 50Cn^{-\delta}}}} \\
		& \le 1 + 4 \brac{1 + 8n^{-\alpha}} \exp\brac[\bigg]{- \alpha \log n} \brac{\sum_{r=0}^{\infty}\exp \brac{-r C_2 \log n}},
	\end{align*}
	where these equations hold with constants $C_1 = \log 40C$ and $C_2 = \tfrac{\delta}{2}$ for $n \ge n_1(C,\delta)$. Simplifying, we get the upper bound:
	\begin{align*}
		\frac{W_k^l}{P_k}
		& \le 1 + 4 \brac{1 + 8n^{-\alpha}} n^{-\alpha} \frac{1}{1 - n^{-C_2}} \\
		& \le 1 + 16 \brac{n^{-\alpha}},
	\end{align*}
	where the last inequality holds for $n \ge n_2(\alpha)$, so that $8n^{-\alpha}, n^{-C_2} \le 0.5$. Therefore, for $n \ge n(C,\delta,\alpha) = \max(n_1(C,\delta), n_2(\alpha))$, we get the result.
\end{proof}

\section{Subgraphs of the Lattice}
\label{s:lattice_subgraphs}
In this section, we do the same analysis for number of paths in induced subgraphs of the lattice $\Z^2$. To ensure that the sampling procedure works efficiently, we will prove the analogues of \Cref{l:path_extension_basic,l:path_k_extension,t:long_cycle_bound} where we restrict ourselves to paths bounded in some set $S \subseteq \Z^2$. First, let us setup some notation:
\begin{notation*}
	For this section, let $S \subseteq \Z^2$ be an induced subset of lattice. Let $O,P$ be two points in $S$. Without loss of generality, we will assume that $O = (0,0)$ and $P = (n_1,n_2) \ge O$. Let $n = n_1 + n_2$ denote the length of shortest path from $n_1$ to $n_2$ in $\Z^2$.
	Let $P_k$ denote the number of paths (\emph{self avoiding walk}) from $O$ to $P$ of length $n+2k$ that are contained in $S$
	Let $W_k^l$ denote the number of walks from $O$ to $P$ of length $n+2k$ that do not have cycles of length smaller than $l$ and are contained in $S$.
\end{notation*}
Now, we make a few definitions which are helpful in the analysis
\begin{definition}
	\label{d:lattice_boundary}
	Given set $S \subseteq \Z^2$, we define the boundary of $S$, denoted by $\partial S$ as the set of points $Q \in S$ such that at least on neighbor of $Q$ is outside $S$.
\end{definition}
\begin{definition}
	\label{d:wide_subgraph}
	Given an induced subgraph $S \subseteq \Z^2$ and points $O,P \in S$, we say that $S$ is $(k,s,\beta)$-wide if at least $(1-\beta)$ fraction of paths of length $n+2k$ from $O$ to $P$ contained in $S$ intersect the boundary $\partial S$ of $S$ in at most $s$ points.
\end{definition}
To give some trivial examples, every set $S$ is $(k,s,1)$ wide for all $k,s$ and on the other hand, every set $S$ is $(k,n+2k,\beta)$-wide for all $k,\beta$. We are now ready to state and prove variants of \Cref{l:path_extension_basic,l:path_k_extension} that hold for bounded subgraphs of the lattice $\Z^2$.
\begin{lemma}
	\label{l:path_extension_basic_subgraph}
	Given an induced subgraph $S \subseteq \Z^2$ and points $O,P$ in $S$ such that $S$ is $(0,s,\beta)$-wide, and numbers $k \in \Z$ and $\varepsilon \in \R$, $\varepsilon,k > 0$, we have the lower bound on number of paths from $O$ to $P$ contained in $S$:
	\[ P_k \ge (1 - \varepsilon - \beta) \binom{t - 2s - 2k}{k} \]
	where $t = \frac{n}{2} - 2 \log n - \sqrt{n \log (1/\varepsilon)}$. Further, there is $n_0 = n_0(\varepsilon)$ such that for all $n \ge n_0$,
	\begin{equation}
		\label{eq:path_extension_bound_basic_subgraph}
		P_k \ge (1 - \varepsilon-\beta) P_0 \frac{(0.49)^k n^k}{k!} \exp\brac{-O \brac{\frac{k(k+s)}{n}}} 
	\end{equation}
\end{lemma}
\begin{proof}
	The proof is almost the same as \Cref{l:path_extension_basic}, except one major change, we need to ensure that the constructed paths $\mathcal{A}(B,M)$ using \Cref{d:extending_paths} stays inside set $S$.
	We can bump a path $B$ at index $i$ if the point $B_i$ and $B_i+1$ are not on the boundary $\partial S$.
	Further, there are at least $(1 - \varepsilon - \beta)P_0$ shortest paths that have at most $\frac{n}{2} + 2 \log n + \sqrt{n \log (1/\varepsilon)}$ corners and at most $s$ points that are on the boundary.
	For these paths, there are at least $\frac{n}{2} - 2 \log n - \sqrt{n \log (1/\varepsilon)} - 2s$ indices which can be bumped while keeping the path inside set $S$. Using \Cref{eq:non_adjacent_choice}, we get the lower bound:
	\[ P_k \ge ( 1 - \varepsilon - \beta ) \binom{t - 2s - 2k}{k} \]
	for 
	\[ t = \tfrac{n}{2} - 2 \log n - \sqrt{n \log (1 / \varepsilon)}. \]
	Since $t = \tfrac{n}{2} - o(n)$ there is $n_0 = n_0(\varepsilon)$ such that for all $n \ge n_0$, $t \ge 0.49n$. This gives us the lower bound, due to computation similar to \Cref{l:path_extension_basic}.
	\begin{align*}
		P_k
		& \ge (1-\varepsilon-\beta) P_0 \frac{(0.49)^k n^k}{k!} \exp\brac{\frac{-2(2s+2k)k - k^2 + k}{0.49 n} - \frac{2k(2k+k+2s)}{0.49 n}} \\
		& \ge (1-\varepsilon) P_0 \frac{(0.49)^k n^k}{k!} \exp\brac{-\frac{25k(k+s)}{n}} \\
		\implies P_k & \ge (1-\varepsilon) P_0 \frac{(0.49)^k n^k}{k!} \exp\brac{-O\brac{\frac{k(k+s)}{n}}} 
	\end{align*}
	completing the proof of the lemma.
\end{proof}
\begin{lemma}
	\label{l:path_k_extension_subgraph}
	Given an induced subgraph $S \subseteq \Z^2$ and points $O,P$ in $S$ such that $S$ is $(k,s,\beta)$-wide and $(0,s,\beta)$-wide, and numbers $k \in \Z$ and $\varepsilon \in \R$, $\varepsilon,k > 0$, then there is $n_0 = n_0(\varepsilon)$ such that we have the lower bound on number of paths from $O$ to $P$ contained in $S$ for $n \ge n_0(\varepsilon)$:
	\[ P_{k+l} \ge (1 - \varepsilon - \beta) \binom{t - 2s - 8k - 3l}{l} \]
	where $t = \tfrac{n}{2} - 2 \log n - \sqrt{n \log (1/\varepsilon) + 2 k n \log n + 30k^2}$. Further, if $k,s \le \tfrac{n}{(\log n)^2}$, there is $n_1 = n_1(\varepsilon)$ such that for all $n \ge n_1$,
	\begin{equation}
		\label{eq:path_k_extension_bound_subgraph}
		P_{k+1} \ge (1 - \varepsilon - \beta) P_k \frac{(0.49)^l n^l k!}{(k+l)!} \exp\brac{-O\brac{\frac{l(k+s+l)}{n}}}
	\end{equation}
\end{lemma}
\begin{proof}
	The proof of this lemma is similar to \Cref{l:path_k_extension}, and we will only mention the key differenecs. First, observe that if $B = \mathcal{B}(A)$ has $c$ corners, then there are at least $n - c - 8k$ indices in $A$ that can be bumped. Among these, there are at most $2s$ indices where the points $A_i$ or $A_{i+1}$ are on boundary. Further, choice of $\varepsilon_1$ in the proof of \Cref{l:path_k_extension} changes to satsify
	\[ \log (1/\varepsilon_1) \ge \log (1/\varepsilon) + \log 4 + k \log n + 4k \log 3 - k \log k + \frac{30k(k+s)}{n} \]
	Therefore, there are at most $\varepsilon P_k$ paths $A$ of length $n+2k$ such that $B$ has at most 
	\[ \tfrac{n}{2} + 2 \log n + \sqrt{n \log (1/\varepsilon) + 2nk \log n + 30k(k+s)} \]
	corners, there are at most $\beta P_k$ paths $A$ of length $n + 2k$ that may have more that $s$ points on the boundary $\partial S$.
	This gives us that at least $(1 - \varepsilon - \beta)P_k$ paths of length $n+2k$ can be bumped at $t - 2s - 8k$ positions for 
	\[ t = \tfrac{n}{2} - 2 \log n - \sqrt{n \log(1/\varepsilon) + 2nk \log n + 30k(k+s)} \]
	For $k,s \le \tfrac{n}{(\log n)^2}$, $t = \tfrac{n}{2} - o(n)$, implying that there is $n_1 = n_1(\varepsilon)$ such that $t \ge 0.49n$. Using \Cref{eq:binomial_bounds} and computations similar to \Cref{l:path_k_extension}, we get the lower bound:
	\begin{align*}
	  P_{k+l}
		& \ge (1-\varepsilon-\beta) P_k \frac{(0.49)^l n^l k!}{(k+l)!} \exp\brac{-\frac{2(8k+3l+2s)l - l^2 + l}{0.49n} - \frac{2l(8k+3l+2s)}{0.49n}}\\
		& \ge (1-\varepsilon-\beta) P_k \frac{(0.49)^l n^l k!}{(k+l)!} \exp\brac{-\frac{70l(k+l+s)}{n}}
	\end{align*}
	This gives us the proposed bound, finishing the proof.
\end{proof}
Next step is to prove that variant of \Cref{t:long_cycle_bound} holds for induced subgraph $S$ of the lattice provided that the set $S$ is satisfies certain properties.
\begin{theorem}
	\label{t:long_cycle_bound_subgraph}
	Given constants $C,\delta,\alpha \ge 0$, a subgraph $S \subseteq \Z^2$, and a function $s=s(k)$ such that $S$ is $(k,s(k),\beta)$-wide where $\beta \le 0.25$ and $s(k) \le Cn^{(1-\delta)}$ for all $k \le Cn^{(1-\delta)}$, there exists $n_0=n_0(C,\delta,\alpha)$ such that for all $n \ge n_0$, $k \le Cn^{(1-\delta)}$ and $l \ge 0$ such that $l\delta > 1 + 2 \alpha$,
	\begin{equation}
		\label{eq:long_cycle_bound_subgraph}
		P_k \le W_k^l \le (1 + 32n^{-\alpha}) P_k
	\end{equation}
\end{theorem}
\begin{proof}
	The proof is similar to the proof of \Cref{t:long_cycle_bound}. The recursive bound still holds, that is,
	\[ W_k^l \le P_k + \sum_{t=0}^{k-l} W_t^l 16^{k-t} (n+2t) \le P_k \sum_{t=0}^{k-l} \brac{1 + 8n^{-\alpha}} \cdot 2 \cdot P_t 16^{k-t} n \]
	since we are restricting all the paths and walks to be restricted to set $S$. Using \Cref{l:path_k_extension_subgraph} with $\varepsilon = 0.5$, we get
	\begin{multline*}
		\frac{P_t 16^{k-t} n}{P_k}
		 \le 4 \cdot \frac{k!}{t!} \cdot \frac{16^{k-t}n}{n^{k-t}(0.49)^{k-t}} \exp\brac{\frac{70(k-t)(k-t+t+s)}{n}} \\
		 \le 4 \exp\bigg((k-t)((1-\delta) \log n + \log C - \log n 
		  + \log 40) + \frac{70(k+s)(k-t)}{n} + \log n\bigg),
	\end{multline*}
	which follows from computations in \Cref{t:long_cycle_bound}.
	The last expression holds for $C_1 = \log 40C$ and $C_2 = \tfrac{\delta}{2}$ for $n \ge n_1(C,\delta)$. Following the steps in \Cref{t:long_cycle_bound} to evaluate the summation, we get the upper bound
	\[ W_k^l \le 1 + 8 \brac{1 + 32n^{-\alpha}} n^{-\alpha} \frac{1}{1 - n^{-C_2}} \le 1 + 32n^{-\alpha}, \]
	where the last inequality holds for $n \ge n_2(\alpha)$ chosen such that $16n^{-\alpha}, n^{-C_2} \le 0.5$. Therefore, for $n \ge n(C,\delta,\alpha) = \max(n_1(C,\delta),n_2(\alpha))$, we get the result.
\end{proof}

\newcommand{\ZZ}{\mathbf{Z}}
\newcommand{\ZZp}{\mathbf{Z}'}
\newcommand{\PP}{\mathcal{P}}

\section{The Aztec Diamond}
\label{s:aztec_diamond}
We let $\ZZ$ denote the planar graph of the integer lattice $\Z^2$ and let $\ZZp$ be its planar dual, with vertices using half-integer coordinates.

We define the Aztec Diamond graph $A_k$ to be the subgraph of $\ZZp$ induced by the set
\begin{equation}\label{aztec}
    V(A_k)=\{(x,y)\in \Z^2+(\tfrac 1 2,\tfrac 1 2)\mid |x|+|y|\leq k\},
\end{equation}
and define $A_k'$ to be the subgraph of $\ZZ$ induced by the set
\begin{equation}\label{aztecprime}
    V(A_k')=\{(x,y)\in \Z^2\mid |x|+|y|\leq k\};
\end{equation}
see Figure \ref{f.aztec}.  We define the boundary $\partial A_k'$ to be those vertices of $A_k'$ $(x,y)$ with $|x|+|y|=k$.

\begin{figure}
  \begin{pdfpic}
    \psset{unit=.3cm}
    \begin{pspicture}[shift=*](-1,-1)(1,1)
      \psdots(.5,.5)(.5,-.5)(-.5,-.5)(-.5,.5)

      \psdots[dotsize=1.5pt](0,0)
      \psdots[dotsize=1.5pt](0,1)(1,0)(0,-1)(-1,0)
    \end{pspicture}\hspace{1cm}
        \begin{pspicture}[shift=*](-2,-2)(2,2)
          \psdots(.5,.5)(.5,-.5)(-.5,-.5)(-.5,.5)

          \psdots(1.5,.5)(1.5,-.5)(-1.5,.5)(-1.5,-.5)
          \psdots(.5,1.5)(-.5,1.5)(.5,-1.5)(-.5,-1.5)

          \psdots[dotsize=1.5pt](0,0)

          \psdots[dotsize=1.5pt](0,1)(1,0)(0,-1)(-1,0)
          \psdots[dotsize=1.5pt](-1,-1)(1,1)(1,-1)(-1,1)

          \psdots[dotsize=1.5pt](0,2)(2,0)(0,-2)(-2,0)
    \end{pspicture}\hspace{1cm}
        \begin{pspicture}[shift=*](-3,-3)(3,3)
          \psdots(.5,.5)(.5,-.5)(-.5,-.5)(-.5,.5)

          \psdots(1.5,.5)(1.5,-.5)(-1.5,.5)(-1.5,-.5)
          \psdots(.5,1.5)(-.5,1.5)(.5,-1.5)(-.5,-1.5)
          
          \psdots(1.5,1.5)(1.5,-1.5)(-1.5,1.5)(-1.5,-1.5)
          \psdots(2.5,.5)(2.5,-.5)(-2.5,.5)(-2.5,-.5)
          \psdots(.5,2.5)(.5,-2.5)(-.5,2.5)(-.5,-2.5)

          \psdots[dotsize=1.5pt](0,0)

          \psdots[dotsize=1.5pt](0,1)(1,0)(0,-1)(-1,0)
          \psdots[dotsize=1.5pt](-1,-1)(1,1)(1,-1)(-1,1)

          \psdots[dotsize=1.5pt](0,2)(2,0)(0,-2)(-2,0)
          \psdots[dotsize=1.5pt](-2,-1)(2,1)(2,-1)(-2,1)
          \psdots[dotsize=1.5pt](-1,-2)(1,2)(1,-2)(-1,2)

          \psdots[dotsize=1.5pt](0,3)(3,0)(0,-3)(-3,0)
          
        \end{pspicture}
            \end{pspicture}\hspace{1cm}
        \begin{pspicture}[shift=*](-3,-3)(3,3)
          \psdots(.5,.5)(.5,-.5)(-.5,-.5)(-.5,.5)

          \psdots(1.5,.5)(1.5,-.5)(-1.5,.5)(-1.5,-.5)
          \psdots(.5,1.5)(-.5,1.5)(.5,-1.5)(-.5,-1.5)
          
          \psdots(1.5,1.5)(1.5,-1.5)(-1.5,1.5)(-1.5,-1.5)
          \psdots(2.5,.5)(2.5,-.5)(-2.5,.5)(-2.5,-.5)
          \psdots(.5,2.5)(.5,-2.5)(-.5,2.5)(-.5,-2.5)

          \psdots(2.5,1.5)(2.5,-1.5)(-2.5,1.5)(-2.5,-1.5)
          \psdots(1.5,2.5)(1.5,-2.5)(-1.5,2.5)(-1.5,-2.5)
          \psdots(3.5,.5)(3.5,-.5)(-3.5,.5)(-3.5,-.5)
          \psdots(2.5,1.5)(2.5,-1.5)(-2.5,1.5)(-2.5,-1.5)
          \psdots(1.5,2.5)(1.5,-2.5)(-1.5,2.5)(-1.5,-2.5)
          \psdots(.5,3.5)(.5,-3.5)(-.5,3.5)(-.5,-3.5)

          \psdots[dotsize=1.5pt](0,0)

          \psdots[dotsize=1.5pt](0,1)(1,0)(0,-1)(-1,0)
          \psdots[dotsize=1.5pt](-1,-1)(1,1)(1,-1)(-1,1)

          \psdots[dotsize=1.5pt](0,2)(2,0)(0,-2)(-2,0)
          \psdots[dotsize=1.5pt](-2,-1)(2,1)(2,-1)(-2,1)
          \psdots[dotsize=1.5pt](-1,-2)(1,2)(1,-2)(-1,2)
        \psdots[dotsize=1.5pt](-2,-2)(2,2)(2,-2)(-2,2)

          \psdots[dotsize=1.5pt](0,3)(3,0)(0,-3)(-3,0)
        \psdots[dotsize=1.5pt](-3,-1)(3,1)(3,-1)(-3,1)
        \psdots[dotsize=1.5pt](-1,-3)(1,3)(1,-3)(-1,3)
        \psdots[dotsize=1.5pt](-4,0)(0,-4)(4,0)(0,4)

        \psline[linewidth=1.5pt](-2,-2)(-2,0)(-1,0)(-1,1)(2,1)(2,2)
        
        \end{pspicture}
  \end{pdfpic}
  \caption{\label{f.aztec}The large dots show the vertex sets of the the Aztec diamonds $A_1$, $A_2,$ $A_3,$ and $A_4$, which are subsets of the dual lattice $\ZZp$.  The small dots show the vertex sets of the corresponding $A_1',A_2',A_3'$ and $A_4'$, which are subsets of the integer lattice $\ZZ=\Z^2$.  In the last case, a path $\PP_{\omega}$ in $A_4'$ corresponding to a partition $\omega$ of $A_4$ is shown.}
\end{figure}

We consider as a toy example the problem of randomly dividing the Aztec diamond into two contiguous pieces $S_1,S_2$, whose boundaries are both nearly as small as possible.  Here we use the \emph{edge-boundary} of $S_i$, which is the number of edges between $S_i$ and $\ZZp\setminus S_i$.  Note that this is the same has the length of the closed walk in $\ZZ$ enclosing $S_i$.   We collect the following simple observations about these sets and their boundaries:
\begin{observation}\label{Akbound}
  $A_k$ has $8k$ boundary edges.\qed
\end{observation}
\begin{observation}\label{antipodal2k}
  Every shortest path in $A_k'$ between antipodal points on $\partial A_k'$ has length $2k$.\qed
\end{observation}
\begin{observation}\label{aligned}
  For $x\geq 0$, the (unique) shortest path between points $(x,y_1)$ and $(x,-y_1)$ of $\partial A_k'$ has length $2k-2x$.\qed
  \end{observation}
  In particular, there is no partition of $A_k$ into two contiguous partition classes such that both have boundary size less than $6k$.  With this motivation, we define $\Omega=\Omega_{C,\varepsilon,k}$ to be the partitions of $A_k$ into two contiguous pieces, each with boundary sizes at most $6k+Ck^{1-\varepsilon}$, and consider the problem of uniform sampling from $\Omega$.    We will show that this problem can be solved in polynomial time with our approach, but also that Glauber dynamics on this state space has exponential mixing time. Observe that we can equivalently view $\Omega$ as set of paths in $A_k'$ between points of $\partial A_k'$, and for any partition $\omega\in \Omega$ we write $\PP_\omega$ for this corresponding path.

  Writing $\omega\sim \omega'$ for $\omega,\omega'\in \Omega$ whenever (viewed as partitions) $\omega,\omega'$ agree except on a single vertex of $A_k$, we define the Glauber dynamics for $\Omega$ to be the Markov chain which transitions from $\omega$ to a uniformly randomly chosen neighbor $\omega'$.
  Recall that we define the \emph{conductance} by
  \begin{equation}
\Phi=\min_{\pi(S)\leq \frac 1 2}\frac{Q(S,\bar S)}{\pi(S)}
  \end{equation}
  where
  \[
  Q(S,\bar S)=\sum_{\substack{\omega\in S\\\omega'\in \bar S}}\pi(\omega)P(\omega,\omega')\leq \pi(\partial S),
  \]
  where $\partial S$ is the set of all $\omega\in S$ for which there exists an $\omega'\in \bar S$ for which $P(\omega,\omega')>0$.

  The mixing time $t_{\mathrm{mix}}$ of the Markov chain with transition matrix $P$ is defined as the minimum $t$ such that the total variation distance between $vP^t$ and the stationary distribution $\pi$ is $\leq \frac 1 4$, for all initial probability vectors $v$.  With these definitions we have
  \begin{equation}
     t_{\mathrm{mix}}\geq \frac{1}{4\Phi}
  \end{equation}
  (e.g. see \cite{Peres}, Chapter 7) and so to show the mixing time is exponentially large it suffices to show that the conductance $\Phi$ is exponentially small.

  To this end, we define $S\subseteq \Omega$ to be the set of $\omega$ for which the endpoints $(x_1,y_1)$ and $(x_2,y_2)$ of $\PP_\omega$ satisfy
  \begin{equation}
    x_1\leq x_2\quad y_1\leq y_2.
  \end{equation}

  Our goal is now to show that $|S|$ is large while $|\partial S|$ is small.  For simplicity we consider the case where $k$ is even but the odd case can be analyzed similarly.

  To bound $S$ from below it will suffice to consider just the partitions whose boundary path in $A_k'$ is a shortest path from the point $(-\tfrac k 2,-\tfrac k 2)$ to the point $(\tfrac k 2,\tfrac k 2)$; note that such a path for the case where $k=4$ is shown in Figure \ref{f.aztec}  There are $\binom{2k}{k}$ such paths and so we have lower bound
  \begin{equation}\label{Slower}
|S|\geq \binom{2k}{k}=\Omega\left(\frac{2^{2k}}{\sqrt k}\right).
  \end{equation}

  To bound $|\partial S|$ from above We will make use of the following count of walks in the lattice:
  \begin{lemma}
	\label{l:number_of_walks}
	For any point $P = (n_1, n_2)$ such that $n_1 + n_2 = n$, the number of walks from $O = (0,0)$ to $P$ of length $n + 2t$ is given by
	\[ \binom{n + 2t}{t} \binom{n+2t}{n_1 + t} \]
\end{lemma}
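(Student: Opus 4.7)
The plan is to prove this by a coordinate change that decouples the two components of the walk. A walk on the $\Z^2$ lattice uses steps from $\{(\pm 1,0),(0,\pm 1)\}$, which are four coupled choices. Under the linear change of variables $(x,y)\mapsto (x+y,\, x-y)$, the four step types map precisely to the four diagonal steps $(\pm 1,\pm 1)$, each combination of signs realized by exactly one original step. Consequently, in the rotated coordinates the two components of a walk evolve \emph{independently}: each step is an independent $\pm 1$ in each of the two new coordinates.

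First I would carry out this change of variables and observe that a walk from $O=(0,0)$ to $P=(n_1,n_2)$ of length $L=n+2t$ corresponds bijectively to a pair $(W_1,W_2)$ of $\pm 1$ walks of length $L$, where $W_1$ goes from $0$ to $n_1+n_2=n$ and $W_2$ goes from $0$ to $n_1-n_2$. Hence the count we want factors as
\[
 N(n_1,n_2;t) \;=\; N_1 \cdot N_2,
\]
where $N_i$ is the number of length-$L$ walks on $\Z$ with $\pm 1$ steps reaching the appropriate endpoint.

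Next I would apply the standard formula for one-dimensional $\pm 1$ walks: a walk of length $L$ from $0$ to $k$ requires $(L+k)/2$ positive steps and $(L-k)/2$ negative steps, giving $\binom{L}{(L+k)/2}$ such walks (and $0$ when the parity is wrong, which is not an issue here since $L$ and $n$ have the same parity and $n_1+n_2=n$). For $W_1$, with $L=n+2t$ and $k=n$, this yields $\binom{n+2t}{n+t}=\binom{n+2t}{t}$. For $W_2$, with $k=n_1-n_2=2n_1-n$, the exponent becomes $(n+2t+2n_1-n)/2=n_1+t$, giving $\binom{n+2t}{n_1+t}$. Multiplying these produces the claimed formula.

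There is no real obstacle here beyond spotting the decoupling rotation; once that is in place, the rest is a one-line binomial calculation. The only point that warrants a sentence of care is the parity/validity check, i.e.\ confirming that both $(L+k)/2$ values are nonnegative integers in the relevant range, which follows immediately from $n_1+n_2=n$ and $n_1,n_2\ge 0$ (and if one allows $n_1$ or $n_2$ to be negative the same formula continues to hold with the usual convention $\binom{m}{j}=0$ for $j\notin[0,m]$).
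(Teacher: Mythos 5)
Your proof is correct, and while it reads differently from the paper's, the two are the same bijection in different clothes. The paper encodes each length-$(n+2t)$ word over $\{U,D,L,R\}$ by the pair of index sets $(X_s,Y_s)$, where $X_s$ collects the positions with an $L$ or $D$ and $Y_s$ those with a $U$ or $L$; since $|X_s|=t$ and $|Y_s|=n_1+t$ and the pair determines the word, this gives $\binom{n+2t}{t}\binom{n+2t}{n_1+t}$ directly. Under your rotation $(x,y)\mapsto(x+y,x-y)$, the set of indices where the first rotated coordinate takes a $-1$ step is precisely $X_s$, and the set where the second rotated coordinate takes a $+1$ step is precisely $Y_s$ (with the paper's convention $n_U-n_D=n_1$, $n_R-n_L=n_2$). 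So your ``decoupled $\pm1$ walks'' and the paper's ``pair of subsets'' are two descriptions of one bijection. What the rotation framing buys you is that the independence of the two binomial factors is manifest and comes from a standard, reusable geometric trick; what the paper's framing buys is that it stays entirely combinatorial and never leaves the alphabet $\{U,D,L,R\}$, which matches the notation used throughout the rest of that section. Your parity remark is a nice touch and is indeed the only validity check needed; the paper leaves it implicit.
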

\begin{proof}
	Let $W_t$ denote number of such walks. Note that any such path can be denoted as a sequence of symbols $U,D,L,R$ which denote moves in the corresponding directions. For a direction $Z \in \set{U,D,L,R}$, let $n_Z$ denote number of symbols signifying the direction that appear in the walk; then the walks from $O$ to $P$ are in bijection with the sequences over $\{U,D,L,R\}$ of length $n+2t$ for which $n_U - n_D = n_1$ and $n_R - n_L = n_2$.  Note then that $n_L+n_D=t$, $n_U+n_L=n_1+t$, and $n_R+n_D=n_2+t$.  There is a bijection from the set of these sequences $s$ to pairs of subsets $(X_s,Y_s)\subseteq [n+2t]$ where $|X_s|=t$ and $|Y_s|=n_1+t$ as follows.  Given such a sequence $s$, we can let $X_s$ be the set of indices with symbols $L$ or $D$, while $Y_s$ is the set of indices with symbols $U$ or $L$.  The sequence $s$ is recovered from the sets $X_s$ and $Y_s$ by assigning the symbol $U$ to indices in $X_s\setminus Y_s$, the symbol $L$ to indices in $X_s\cap Y_s$, the symbol $D$ to those in $Y_s\setminus X_s$, and the symbol $R$ to indices in neither $X_s$ nor $Y_s$.
\end{proof}
Now the boundary $\partial S$ of $S$ thus consists of paths which satisfy either $x_1=x_2$ or $y_1=y_2$.  Observation \ref{aligned}, together with the condition that the total length of a closed walk enclosing each partition class is at most $6k+O(k^{1-\varepsilon})$, implies that in these cases, we must have $|y_i|=O(k^{1-\varepsilon})$ in the case where $x_1=x_2$ or $|x_i|=O(k^{1-\varepsilon})$ in the case where $y_1=y_2$.  In particular, we have without loss of generality that $x_1=x_2$, and $y_2=y_1+2k-O(k^{1-\varepsilon})$.  In particular, letting $\ell_\omega=y_2-y_1$, we have that the path $\PP_\omega$ has length $\ell_\omega + O(\ell_\omega^{1-\varepsilon})$. Now by Lemma \ref{l:number_of_walks}, the number of choices for such walks (for fixed $x_i,y_i$, for which there are only polynomially many choices) is
\asnote{changing $\ell$ to $\ell_\omega$}
\begin{equation}\label{Supper}
\binom{\ell_\omega+O(\ell_\omega^{1-\varepsilon})}{O(\ell_\omega^{1-\varepsilon})}^2\leq 2^{O(\ell_\omega^{1-\varepsilon})}
\end{equation}
for $0\leq \varepsilon\leq 1$. Together, \eqref{Supper} and \eqref{Slower} imply that
\[
\Phi= \frac{2^{O(\ell_\omega^{1-\varepsilon})}}{2^{2k}/\sqrt{k}}\lesssim \frac{1/4}{2^{\varepsilon k}},
\]
and so the mixing time $t_{\mathrm{mix}}$ satisfies
\begin{equation}
  t_{\mathrm{mix}}\geq 2^{\varepsilon k},
\end{equation}
with respect to the fixed parameter $\varepsilon>0$. This gives the following theorem:
\begin{theorem}[\Cref{t:glauber_mixing} restated]
    Glauber Dynamics on contiguous $2$-partitions of $A_k$ with boundary of length at most $6k + Ck^{(1-\epsilon)}$ has exponential mixing time.
\end{theorem}

On the other hand, we claim that we can sample the partitions $\omega \in \Omega$ efficiently using \Cref{alg:sampling_paths}, by applying it to each pair of points on the boundary $A_k'$, to generate the path $P_\omega$. To show this, we will argue that the set $A_k'$ has the correct width property with endpoints of $P_\omega$. Formally,
\begin{lemma}
    Let $\omega \in \Omega$ be a partition of $A_k$. Let $P_\omega$ be corresponding path in $A_k'$ with endpoints $P_1,P_2$. Then $A_k'$ is $(\ell, 16 \ell + 4Ck^{(1-\epsilon)}, 0)$-wide with respect to points $P_1,P_2$ for all $\ell$.
\end{lemma}
\begin{proof}
    Let $P_i = (x_i,y_i)$ for $i = 1,2$. Without loss of generality, let $(x_2, y_2) \ge (0,0)$. Let $Q = (-x_2,-y_2)$ be the point anti-podal to $P_2$ in $\partial A_k'$. We will break the proof into three cases, based on which quadrant $P_1$ is in.
    
    Suppose $P_1$ is in third quadrant. Then the distance between $P_1$ and $P_2$ is exactly $2k$. Therefore, $P_2$ is at most $Ck^{(1-\epsilon)}$ distance from $Q$. The lattice box $\mathcal{R}(P_1,P_2)$ has at most 
    \[ 2 \abs{x_1 + x_2} + 2 \abs{y_1 + y_2} \]
    points in $\partial A_k'$. This is exactly the distance between $P_1$ and $Q$. Therefore, a shortest path from $P_1$ to $P_2$ can intersect $\partial A_k'$ at at most $2Ck^{(1-\epsilon)}$ many points. It follows that a path of length $2k + 2\ell$ is contained in $\mathcal{R}(P_1-(\ell,\ell), P_2+(\ell,\ell))$, which contains at most $16 \ell + 2Ck^{(1-\epsilon)}$ points in $\partial A_k'$, implying that any path of length $2k + 2\ell$ can intersect $\partial A_k'$ in at most $16 \ell + 4 C k^{(1-\epsilon)}$.
    
    Suppose $P_1$ is in the second quadrant. Then the distance between $P_1$ and $P_2$ is
    \[ x_2 - x_1 + \abs{y_2 - y_1} = x_2 - x_1 + \max(y_1, y_2) - \min(y_1, y_2) \ge 2k - 2 \min(y_1,y_2) \]
	Further, length of the lower boundary of $A_k$ between $P_1$ and $P_2$ is at least $4k + y_1 + y_2$, and hence boundary of the lower partition is at least $6k + \abs{y_2 - y_1}$, which implies that
	\[ \abs{y_2 - y_1} \le  Ck^{(1-\epsilon)} \]
	The lattice box $\mathcal{R}(P_1,P_2)$ contains at most $2\abs{y_2 - y_1} + 4$ points on the boundary $\partial A_k'$. By similar argument to above, we can conclude that any path of length $2\ell$ larger than the shortest path is contained in a slightly bigger lattice box, and can intersect the boundary $\partial A_k'$ in at most
	\[ 2\abs{y_2 - y_1} + 16\ell + 4 \le 16\ell + 4Ck^{(1-\epsilon)} \]
	points.

	The case when $P_1$ is in the fourth quadrant is handled similarly to the case when $P_1$ is in the second quadrant. This proves that in all cases, the Aztec Diamond is $(\ell, 16\ell + 4Ck^{(1-\epsilon)}, 0)$-wide. 
\end{proof}

This lemma implies that for $\ell \le Ck^{(1-\epsilon)}$, and $s(l) = 20Ck^{(1-\epsilon)}$, the set $A_k'$ satisfies the hypothesis of \Cref{t:long_cycle_bound_subgraph} for all points $P_1,P_2$ that are endpoints of $P_\omega$ for some $\omega \in \Omega$. Hence, for each pair of points $P_1, P_2 \in \partial A_k'$, we can compute $W_\ell^\lambda(P_1,P_2)$ for all $\ell \le Ck^{(1-\epsilon)}$, where $\lambda \epsilon > 1$. This allows us to uniformly sample $P_\omega$, for $\omega \in \Omega$, with rejection sampling, using the following algorithm:

\begin{algorithm}[H]
	\caption{Partition Sampling}
	\label{alg:aztec_sampling}
	\begin{algorithmic}[1]
		\State Compute $\Call{DP}{Q,P,w,t}$ for all $Q \in A_k'$, $P \in \partial A_k'$, $w \in \Phi_\lambda$, $0 \le t \le 2k + Ck^{1-\varepsilon}$ using \Cref{alg:low_girth_walks}
		\While{$P_\omega$ is not a path}
			\State Sample $P_1,P_2,t$ proportional to $\Call{DP}{Q,P,\{O\},t}$
			\State Sample $P_\omega$ from $P_1$ to $P_2$ of length $t$ using \Cref{alg:sampling_walks}
		\EndWhile
		\State \Return $P_\omega$
	\end{algorithmic}
\end{algorithm}

\newpage
\nocite{*}
\bibliographystyle{plainnat}
\bibliography{main}

\newpage
\appendix
\section{Bounds on Binomial Coefficients}
\label{s:binomial_bounds}
We first recall some exponential bounds on $1+x$. We have the standard upper bound:
\begin{equation}
	\label{eq:exp_inequality_1}
	e^x \ge 1 + x \qquad \forall x \in \R
\end{equation}
On the other hand, we have the lower bound:
\begin{equation}
  \label{eq:exp_inequality_2}
	e^{\frac{x}{1+x}} \le 1 + x \le e^x \qquad \forall \, x > - 1
\end{equation}
This follows since 
\[ 1 - t \le e^{-t} \implies 1 - \frac{x}{1+x} \le e^{-\frac{x}{1+x}} \implies \frac{1}{1+x} \le e^{-\frac{x}{1+x}} \]
We get \Cref{eq:exp_inequality_2} from this by taking reciprocals whenever $\tfrac{1}{1+x} \ge 0$. Further, \Cref{eq:exp_inequality_2} implies that
\begin{equation}
	\label{eq:exp_inequality_3}
	e^{\frac{x}{2}} \le 1 + x \le e^x \qquad \forall \, 0 \le x \le 1
\end{equation}
We also recall the Sterling's Approximation - the non-asymptotic version of Sterling's Approximation is given in \citet{robbins1995stirling} as
\begin{equation}
	\label{eq:sterling_approximation} 
	\sqrt{2 \pi n} \brac{\frac{n}{e}}^n \exp\brac{\frac{1}{12n + 1}} \le n! \le \sqrt{2 \pi n} \brac{\frac{n}{e}}^n \exp\brac{\frac{1}{12n}}
\end{equation}

We can use these exponential bounds on $(1+x)$ to bound the binomial coefficients. In particular, we are interested in bounded the binomial coefficient $\binom{n+x}{k}$ in the case where $x,k \le \tfrac{n}{10}$. Recall by the definition of binomial coefficients:
	\[
		\binom{n+x}{k}
	= \frac{1}{k!} \prod_{i=0}^{k-1} \brac{n + x - i}
	= \frac{n^k}{k!} \prod_{i=0}^{k-1} \brac{1 + \frac{x-i}{n}}
\]
Using \Cref{eq:exp_inequality_1} we get the following upper bound:
\begin{align*}
	\binom{n+x}{k}
	& \le \frac{n^k}{k!} \exp\brac{\sum_{i=0}^{k-1} \frac{x-i}{n}} \\
	& \le \frac{n^k}{k!} \exp\brac{\frac{2kx - k^2 + k}{2n}}
\end{align*}
Using \Cref{eq:exp_inequality_3} we get the following lower bound when $n + x - k \ge \abs{x},k$:
\begin{align*}
	\binom{n+x}{k}
	& \ge \frac{n^k}{k!} \exp\brac{\sum_{i=0}^{k-1} \frac{\frac{x-i}{n}}{1 + \frac{x-i}{n}}} \\
	& = \frac{n^k}{k!} \exp\brac{\sum_{i=0}^{k-1} \frac{x-i}{n+x-i}} \\
	& = \frac{n^k}{k!} \exp\brac{\sum_{i=0}^{k-1} \frac{x-i}{n} + \frac{x-i}{n+x-i} - \frac{x-i}{n}} \\
	& = \frac{n^k}{k!} \exp\brac{\sum_{i=0}^{k-1} \frac{x-i}{n} - \frac{(x-i)^2}{n(n+x-i)}} \\
	& \ge \frac{n^k}{k!} \exp\brac{\frac{2kx - k^2 + k}{2n} - \frac{2k(\abs{x} + k)}{n}}
\end{align*}
Where the last inequality follows since $(x-i)^2 \le 2x^2 + 2 i^2 \le 2x^2 + 2k^2 \le 2(\abs{x} + k)(n+x-i)$ assuming that $n + x - i \ge \abs{x},k$. Together, we get the following upper and lower bounds on the binomial coefficients:
\begin{equation}
	\label{eq:binomial_bounds}
	\frac{n^k}{k!} \exp\brac{\frac{2kx - k^2 + k}{2n} - \frac{2k\abs{x} + 2k^2}{n}} \le \binom{n+x}{k} \le \frac{n^k}{k!} \exp\brac{\frac{2kx - k^2 + k}{n}}
\end{equation}

\end{document}